\documentclass[12pt, a4paper,reqno]{amsart}
\usepackage{amsmath}
\usepackage{amssymb,latexsym}
 \usepackage[latin1]{inputenc}
\usepackage[dvips]{graphicx}
\usepackage{color}
\usepackage{txfonts} 
\date{October 15, 2012}
 \usepackage[latin1]{inputenc}
\usepackage{amsthm}

\newtheorem{thm}{Theorem}[section]

\newtheorem{rem}{Remark} [section]
 \newtheorem{prop}{Proposition} [section]
\newtheorem{lemme}{Lemma} [section]
\newtheorem{cor}{Corollary} [section]

\newcommand{\R}{\mathbb R}
\newcommand{\N}{\mathbb N}

\renewcommand{\l}{\lambda}

\begin{document}

\title[Extremal Eigenvalues of the Laplacian]
{Extremal Eigenvalues of the Laplacian on Euclidean domains and closed surfaces  }

\author{Bruno Colbois}
\address{ Universit\'e de Neuch\^atel, Laboratoire de
Math\'ematiques, 13 rue E. Argand, 2007 Neuch\^atel, Switzerland.}
\email{Bruno.Colbois@unine.ch}

\author{Ahmad El Soufi}
\address{Universit\'e de Tours, Laboratoire de Math\'ematiques
et Physique Th\'eorique, UMR-CNRS 7350, Parc de Grandmont, 37200
Tours, France.} \email{elsoufi@univ-tours.fr.}

\thanks{The second author has benefited from the support of the ANR (Agence Nationale de la Recherche) through FOG project ANR-07-BLAN-0251-01.}

\begin{abstract}
We investigate properties of the sequences of extremal values that could be achieved by the eigenvalues of the Laplacian on Euclidean domains of unit volume, under Dirichlet and Neumann boundary conditions, respectively. In a second part, we study sequences of extremal eigenvalues of the Laplace-Beltrami operator on closed surfaces of unit area. 

\end{abstract}

\subjclass{35P15, 58J50, 58E11.}
\keywords{Laplacian Eigenvalues, extremal eigenvalues, spectral geometry,  closed surfaces}

\maketitle

\section{Introduction}

A classical topic in spectral geometry is to investigate upper and lower bounds of eigenvalues of the Laplacian subject to various  boundary conditions and under the fixed volume constraint. Among the most known results in this topic are the  Faber-Krahn inequality for the first Dirichlet eigenvalue, the Szegö-Weinberger inequality for the first positive Neumann eigenvalue on bounded Euclidean domains, and Hersch's inequality for  the first positive eigenvalue on  closed simply connected surfaces. 

\medskip

Just like most of the results one can find in the literature, these sharp inequalities deal with the lowest order positive eigenvalues.  
Aside from numerical approaches, mainly in dimension 2, the determination of  optimal  bounds for  eigenvalues of higher order
 is a problem that remains largely open.

\medskip

In this article our aim will be to show how it is possible, through quite simple considerations, to establish certain intrinsic relationships between the infima (or the suprema) of eigenvalues of different orders. Let us start by fixing some notations. 

\medskip

Given a regular bounded domain $\Omega\subset \R^n$, $n\ge 2$, we designate by $\left\{\lambda_k(\Omega)\right\}_{k\ge 1}$ (resp. $\left\{\mu_k(\Omega)\right\}_{k\ge 0}$) the nondecreasing sequence of eigenvalues of the Laplacian on $\Omega$ with Dirichlet (resp. Neumann) boundary conditions, each repeated according to its multiplicity
.  We introduce the following universal sequences of real numbers that are attached to the $n$-dimensional Euclidean space : 
$$
{\l}^*_k(n)=\inf \ \{\l_k(\Omega)\ :\ \Omega\subset \R^n,\ |\Omega|=1\}
$$
and
$$
{\mu}^*_k(n)=\sup \ \{\mu_k(\Omega)\ :\ \Omega\subset \R^n,\ |\Omega|=1\},
$$
where $ |\Omega|$ stands for the volume of $\Omega$.
Notice that thanks to standard continuity results for eigenvalues,  the definition of ${\l}^*_k(n)$ (resp. ${\mu}^*_k(n)$) does not change if the infimum (resp. the supremum) is taken only over connected domains.  
The famous Faber-Krahn and Szegö-Weinberger isoperimetric inequalities then read respectively as follows:
$$
{\l}^*_1(n)=\l_1(B^n)\vert B^n\vert^{\frac 2 n} =
j^2_{\frac n2-1,1} \omega_n^{\frac 2 n}  $$
and 
$$
{\mu}^*_1(n)=\mu_1(B^n)\vert B^n\vert^{\frac 2 n} = p^2_{\frac n2,1} \omega_n^{\frac 2 n},
$$
where $\omega_n $ is the volume of the unit Euclidean ball $B^n$, $j_{\frac n2-1,1}$ is the first positive zero of the Bessel function $J_{\frac n2-1}$ and $p_{\frac n2,1}$ is the first positive zero of the derivative of the Bessel function $J_{\frac n2}$. It is also well known that (see for instance  \cite[p. 61]{Henrot})
$$
{\l}^*_2(n) = 2^{\frac 2 n}{\l}^*_1(n). $$
The same relation is conjectured to hold true between ${\mu}^*_2(n)$ and ${\mu}^*_1(n)$ (see \cite{GNP} for a recent result about this conjecture in the 2-dimensional case). 
The following inequalities are also expected to be satisfied for every $k\ge 1$ (P\'olya's conjecture), 
$$  {\mu}^*_k(n)\le 4\pi^2 \left(\frac k{\omega_n} \right)^{\frac  2 n }\le {\l}^*_k(n), $$
where $4\pi^2 \left(\frac k{\omega_n} \right)^{\frac  2 n }$ is the first term of the  Weyl asymptotic expansion of both Dirichlet and Neumann eigenvalues of domains of volume one. Although this conjecture is still open, it was proved by Berezin \cite{Berezin} and Li and Yau \cite{LY1} that $ {\l}^*_k(n)\ge \frac n{n+2}4\pi^2 \left(\frac k{\omega_n} \right)^{\frac  2 n }$, while Kröger \cite{Kroger1,Kroger2} proved that $  {\mu}^*_k(n)\le \left(1+\frac n2\right)^{\frac  2 n }4\pi^2 \left(\frac k{\omega_n} \right)^{\frac  2 n }$.

\medskip

The first observation we make in this paper is that the sequence $\lambda^*_k(n)^{n/2}$  is subadditive while  $\mu^*_k(n)^{n/2}$ is superadditive. Indeed,  we prove (Theorem \ref{thm dirandneu}) that, for every $k\ge2$ and any finite family $i_1,\dots,  i_p$ of positive integers such that $i_1+ i_2+ \cdots + i_p=k$,
\begin{equation}\label{dir0}
{\l}^*_{k}(n)^{n/2}\le {\l}^*_{i_1}(n)^{n/2} + {\l}^*_{i_2}(n)^{n/2}+\cdots +{\l}^*_{i_p}(n)^{n/2}
\end{equation}
and
\begin{equation}\label{neu0}
{\mu}^*_{k}(n)^{n/2}\ge {\mu}^*_{i_1}(n)^{n/2} + {\mu}^*_{i_2}(n)^{n/2}+\cdots +{\mu}^*_{i_p}(n)^{n/2}.
\end{equation}

\medskip
An immediate consequence of Theorem \ref{thm dirandneu} and Fekete's Subadditive Lemma is that the sequences ${\l}^*_{k}(n)/k^{\frac 2n}$ and  ${\mu}^*_{k}(n)/k^{\frac 2n}$ are convergent and that P\'olya's conjecture for Dirichlet (resp. Neumann) eigenvalues is equivalent to the following  
$$\lim_k \frac{{\l}^*_{k}(n)}{k^{\frac 2n}} = 4\pi^2 \omega_n^{-\frac  2 n }$$
(resp. $\lim_k \frac{{\mu}^*_{k}(n)}{k^{\frac 2n}} = 4\pi^2 \omega_n^{-\frac  2 n }$, see Corollary \ref{Fekete}).


\medskip

Besides their theoretical interest, the inequalities \eqref{dir0} and \eqref{neu0} provide a ``rough test" for the numerical methods used to approximate  ${\l}^*_{k}(n)$ and ${\mu}^*_{k}(n)$. For example, we observe that the numerical values for ${\l}^*_k(2)$ obtained by Oudet  \cite{Oudet} (see also \cite[p. 83]{Henrot}) could be improved since the gap between the approximate values given for some successive ${\l}^*_k(2)$ exceeds $\pi  j_{0,1}^2$. Improvements of Oudet's calculations leading to approximate values which are consistent with \eqref{dir0} and \eqref{neu0} have been obtained recently  by Antunes and Freitas \cite{AnF}.

\medskip

Regarding the equality case in \eqref{dir0} we prove that if it holds, then the infimum ${\l}^*_k(n)$ is approximated to any desired accuracy by the $\l_k$ of a  disjoint union of $p$ domains $A_j$, $j=1,\dots, p$, each of which being, up to volume normalization,  an ``almost" minimizing domain for  ${\l}^*_{i_j}(n)$ (see Theorem \ref{thm dirandneu} for a precise statement). A similar phenomenon occurs for the case of equality in \eqref{neu0}.
 

\medskip


This result complements that by Wolf and Keller \cite{WK} where it is proved that if $\Omega=A\cup B$ is a disconnected minimizer of $\lambda_k$, then there exists a positive integer $i< k$ so that, after volume normalizations, $A$  minimizes  $\lambda_i$ and $B$ minimizes  $\lambda_{k-i}$ and, moreover, ${\l}^*_k(n)^{n/2}= {\l}^*_i(n)^{n/2} +{\l}^*_{k-i}(n)^{n/2}$. A Neumann analogue of this result has been recently obtained by Poliquin and Roy-Fortin \cite{Poliquin-Fortin}

\medskip

Our next observation is that Wolf-Keller's result extends to ``almost minimizing" disconnected domains as follows (Theorem \ref{WKDIR}): If a disconnected domain $\Omega=A\cup B$ minimizes $\lambda_k$ to within some $\varepsilon \ge 0$, then there exists an integer $i$ so that, after volume normalizations, $A$  minimizes  $\lambda_i^{n/2}$ to within $\varepsilon $ and $B$ minimizes  $\lambda_{k-i}^{n/2}$
to within $\varepsilon $, and, moreover,
$$0\le \left\{ {\l}^*_i(n)^{n/2} +{\l}^*_{k-i}(n)^{n/2} \right\} - {\l}^*_k(n)^{n/2}\le\varepsilon .$$

A similar property holds for ``almost maximizing" disconnected domains of Neumann eigenvalues (Theorem \ref{WKNEU}).

\medskip

The second part of the paper is devoted to the case of compact surfaces without boundary. If $S$ is an orientable compact surface of the 3-dimensional space, we denote by $\left\{\nu_k(S)\right\}_{k\ge 0}$ the spectrum of the Laplace-Beltrami operator acting on $S$ (here $\nu_0(S)=0$).  The eigenvalue $\nu_k$ is not bounded above on the set of compact surfaces of fixed area, as shown in \cite[Theorem 1.4]{CDE1} (which also justifies why we do not consider higher dimensional hypersurfaces). However, according to Korevaar \cite{K}, for every  integer  $\gamma \ge 0$, the $k$-th eigenvalue  $\nu_k$  is bounded above on the set $\mathcal M (\gamma)$ of compact  surfaces of genus $\gamma$ and fixed area. As before, we introduce the sequence 
$$  \nu^*_k (\gamma)=\sup \left\{\nu_k(S)\; : \; S\in \mathcal M (\gamma) \mbox{ and }|S|=1\right\}=\sup_{S\in \mathcal M (\gamma)}\nu_k(S)|S|.$$
As we will see in Section 3, an equivalent definition of $\nu^*_k (\gamma)$ consists in taking the supremum of the $k$-th eigenvalue $\nu_k(\Sigma_\gamma, g)$ of the Laplace-Beltrami operator on compact orientable 2-dimensional Riemannian manifolds  of  genus $\gamma$ and  area one.

\medskip

For $\gamma =0$, one has, from the results of Hersch  \cite{H} and  Nadirashvili \cite{N1} 
$$  \nu^*_1 (0) =8\pi  \quad \mbox{ and } \quad  \nu^*_2(0) =16\pi.$$
Results concerning extremal eigenvalues on surfaces of genus 1 and 2 can be found in \cite{EGJ, EI1, EI3, EI4, JLNNP, JNP, LY2, N}.  On the other hand, we have proved in \cite{CE} that the sequence $\nu^*_k (\gamma)$ is non decreasing with respect to $\gamma$ and that it is bounded below by a linear function of $k$ and $\gamma$.  A. Hassannezhad \cite{Asma} has recently proved that $\nu^*_k (\gamma)$ is also bounded from below by such a linear function of $k$ and $\gamma$. 

\medskip

In Theorem \ref{surfaces} we prove that the double sequence $\nu^*_k (\gamma)$  satisfies the following property (Theorem \ref{surfaces}): For every  $\gamma\ge 0$, $k\ge 1$, if  $\gamma_1\dots,\gamma_p \in\N$ and $i_1, \dots,i_p  \in\N^*$   are such that $\gamma_1+\dots+\gamma_p=\gamma $ and $i_1+ \dots+i_p=k$, then
 \begin{equation}\label{surface0}
 \nu_k^*(\gamma)\ge\nu_{i_1}^*(\gamma_1)+\dots +\nu_{i_p}^*(\gamma_p).
 \end{equation}

 As before, we investigate the equality case in \eqref {surface0} and establish the following Wolf-Keller's type result (Corollary \ref{WKSURF1}) :  Assume that the disjoint union $S_1 \sqcup S_2$ of two compact orientable surfaces $S_1$ and $S_2$  of genus $\gamma_1$, $\gamma_2$, respectively, satisfies 
 \begin{equation}
{\nu}_k(S_1 \sqcup S_2) = {\nu}_k^*(\gamma). 
\end{equation}
with $\vert S_1\vert+\vert S_2\vert =1$ and $\gamma_1+\gamma_2=\gamma$.  
Then there exists an integer $ i \in\{1,\cdots, k-1\} $ such that 
$${\nu}_k^*(\gamma) = {\nu}_i^*(\gamma_1)+{\nu}_{k-i}^* (\gamma_2) $$ 
$$\nu_i(S_1)|S_1|= {\nu}_i^*(\gamma_1)  \; \;\; \mbox{and} \;\; \; {\nu}_{k-i} (S_2)|S_2|= {\nu}_{k-i}^*(\gamma_2).$$

 \medskip
 
Actually, we give a more general result  where $S_1 \sqcup S_2$ is assumed to maximize $\nu_k$ to within a  positive $\varepsilon$ (Theorem \ref{WKSURF}). 

\medskip

Similar considerations can be made about nonorienrtable surfaces. This is discussed at the end of the paper. 


\section{Dirichlet and Neumann eigenvalue problems on Euclidean domains}

To every (sufficiently regular) bounded domain $\Omega $ in $\R^n$, $n\ge 2$, we associate two sequences of real numbers 
$$
 0<\lambda_1(\Omega) \leq \lambda_2(\Omega) \leq \cdots \leq \lambda_k(\Omega) \leq \cdots 
$$
and $$
 0 = \mu_0(\Omega) < \mu_1(\Omega) \leq \mu_2(\Omega) \leq \cdots \leq \mu_k(\Omega) \leq \cdots 
$$
where $\lambda_k(\Omega)$ (resp. $\mu_k(\Omega)$) denotes the $k$-th eigenvalue of the Laplacian in $\Omega $ with Dirichlet (resp. Neumann) boundary conditions on $\partial \Omega$.
If  $t$ is a positive number, the notation $t\ \Omega$ will designate  the image of the domain $\Omega$ under the Euclidean dilation of ratio $t$.  One has
$$\l_k(t\ \Omega)=t^{-2}\l_k(\Omega)\; , \; \mu_k(t\ \Omega)=t^{-2}\mu_k(\Omega)\; \mbox{and} \; |t\ \Omega|=t^n|\Omega|$$
and, then
\begin{eqnarray}\label{lambda}
\nonumber {\l}^*_k(n)&=&\inf \ \{\l_k(\Omega)\ :\ \Omega\subset \R^n,\ |\Omega|=1\} \\
&=&\inf \ \{\l_k(\Omega)\ :\ \Omega\subset \R^n,\ |\Omega|\le 1\}\\
\nonumber &=&\inf\ \{\l_k(\Omega)\vert \Omega\vert^{2/n}\ : \ \Omega \subset \R^n\}
\end{eqnarray}
and
\begin{eqnarray}\label{mu}
\nonumber{\mu}^*_k(n)&=&\sup \ \{\mu_k(\Omega)\ :\ \Omega\subset \R^n,\ |\Omega|=1\}\\
&=&\sup \ \{\mu_k(\Omega)\ :\ \Omega\subset \R^n,\ |\Omega|\ge 1\}\\
\nonumber &=&\sup\ \{\mu_k(\Omega)\vert \Omega\vert^{2/n}\ :\ \Omega \subset \R^n\}.
\end{eqnarray}

\medskip

The sequences ${\l}^*_k(n)$ and ${\mu}^*_k(n)$ satisfy the following intrinsic properties. 

\begin{thm}\label{thm dirandneu}
Let $n$ and $k$ be two positive integers and let $i_1\le i_2\le \cdots\le i_p$ be positive integers such that $i_1+ i_2+ \cdots + i_p=k$. 

\noindent 1) We have, 
\begin{equation}\label{dir}
{\l}^*_{k}(n)^{n/2}\le {\l}^*_{i_1}(n)^{n/2} + {\l}^*_{i_2}(n)^{n/2}+\cdots +{\l}^*_{i_p}(n)^{n/2}
\end{equation}
and
\begin{equation}\label{neu}
{\mu}^*_{k}(n)^{n/2}\ge {\mu}^*_{i_1}(n)^{n/2} + {\mu}^*_{i_2}(n)^{n/2}+\cdots +{\mu}^*_{i_p}(n)^{n/2},
\end{equation}

\medskip

\noindent 2) If the equality holds in \eqref{dir}, then, for every $\varepsilon >0$, there exist $p$ mutually disjoint domains $A_1, A_2, \cdots, A_p$ such that

\smallskip

\begin{itemize}
\item [i)] ${\l}_{k} (A_1\cup \cdots\cup A_p) \le (1+\varepsilon){\l}^*_{k} $ ;
		
	\item [ii)] $\forall j\le p$, ${\l}^*_{i_j}\le{\l}_{i_j}( A_j) |A_j|^{2/n} \le (1+\varepsilon){\l}^*_{i_j}$.
		\item [iii)] $|A_1| +\cdots +|A_p|=1$ and, $\forall j\le p$, $\frac{{\l}^*_{i_j}}{(1+\varepsilon){\l}^*_{k}}\le |A_j|^{2/n}\le \frac{(1+\varepsilon){\l}^*_{i_j}}{{\l}^*_{k}}$; 

\end{itemize}
where ${\l}^*_{k}$ stands for ${\l}^*_{k}(n)$. 
\medskip

\noindent 3)
If the equality holds in \eqref{neu}, then, for every $\varepsilon >0$, there exist $p$ mutually disjoint domains $A_1, A_2, \cdots, A_p$ such that

\smallskip

\begin{itemize}
\item [i)] ${\mu}_{k} (A_1\cup \cdots\cup A_p) \ge (1-\varepsilon){\mu}^*_{k} $ ;
		
	\item [ii)] $\forall j\le p$, $(1-\varepsilon){\mu}^*_{i_j}\le{\mu}_{i_j}( A_j) |A_j|^{2/n} \le {\mu}^*_{i_j}$.
	\item [iii)] $|A_1| +\cdots +|A_p|=1$ and, $\forall j\le p$, $ \frac{(1-\varepsilon){\mu}^*_{i_j}}{{\mu}^*_{k}}\le |A_j|^{2/n}\le \frac{{\mu}^*_{i_j}}{(1-\varepsilon){\mu}^*_{k}}$; 
\end{itemize}
where ${\mu}^*_{k}$ stands for ${\mu}^*_{k}(n)$. 
\end{thm}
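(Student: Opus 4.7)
The plan is to prove all three parts with a single construction: for each $j$ pick a near-extremal bounded regular domain $\Omega_j\subset\R^n$ of unit volume for $\lambda_{i_j}^*(n)$ (resp.\ $\mu_{i_j}^*(n)$), form disjoint translated copies of suitable Euclidean dilates $A_j = t_j\Omega_j$, and exploit the fact that the Laplacian spectrum of a disjoint union is the multiset union of the spectra of the components (with the caveat that in the Neumann case a union of $p$ components carries $p$ zero eigenvalues).

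For the Dirichlet part of 1), I would fix $\delta>0$ and choose $\Omega_j$ with $|\Omega_j|=1$ and $\lambda_{i_j}(\Omega_j)\le(1+\delta)\lambda_{i_j}^*(n)$. Select the dilation factors so that all the values $\lambda_{i_j}(A_j)=t_j^{-2}\lambda_{i_j}(\Omega_j)$ coincide with a common number $\Lambda$, and impose the volume normalization $|A_1|+\cdots+|A_p|=\sum_j t_j^n = 1$; this pins down $\Lambda^{n/2}=\sum_j\lambda_{i_j}(\Omega_j)^{n/2}$. Since each $A_j$ contributes $i_j$ eigenvalues in $(0,\Lambda]$, the union has at least $k$ such eigenvalues, hence $\lambda_k(A_1\cup\cdots\cup A_p)\le\Lambda$, and combining with the near-extremality of the $\Omega_j$ gives
\[
\lambda_k^*(n)^{n/2}\le \sum_j\lambda_{i_j}(\Omega_j)^{n/2}\le (1+\delta)^{n/2}\sum_j\lambda_{i_j}^*(n)^{n/2};
\]
sending $\delta\to 0$ yields \eqref{dir}. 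The Neumann half is formally symmetric: choose near-maximizing $\Omega_j$, dilate so that $\mu_{i_j}(A_j)=M$ is common, and verify that each component contributes at most $i_j$ eigenvalues strictly less than $M$ (this is where the starting index $\mu_0=0$ of the Neumann numbering matters), so the union has at most $k$ eigenvalues $<M$ and $\mu_k(A_1\cup\cdots\cup A_p)\ge M$.

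For parts 2) and 3) I would feed the equality hypothesis back into this construction. In the Dirichlet equality case, property (ii) comes for free, since $\lambda_{i_j}(A_j)|A_j|^{2/n}=\lambda_{i_j}(\Omega_j)$ by dilation invariance and the $\Omega_j$ were chosen near-extremal. The equality $\sum_j\lambda_{i_j}^*(n)^{n/2}=\lambda_k^*(n)^{n/2}$ then sandwiches $\Lambda$ between $\lambda_k^*$ (from $\lambda_k(A_1\cup\cdots\cup A_p)\ge\lambda_k^*$, which holds because the union has volume $1$) and $(1+\varepsilon)\lambda_k^*$ (from the identity $\Lambda^{n/2}=\sum_j\lambda_{i_j}(\Omega_j)^{n/2}$), yielding (i); the estimates in (iii) then follow from $|A_j|^{2/n}=t_j^2=\lambda_{i_j}(\Omega_j)/\Lambda$ combined with the two-sided control on numerator and denominator. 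Part 3) is obtained by the same argument with the roles of sup/inf, $1+\varepsilon$/$1-\varepsilon$ and $\le$/$\ge$ swapped. The main obstacle will be the Neumann bookkeeping of the $p$ zero eigenvalues, and, in the equality cases, making sure that the numerator and denominator of $|A_j|^{2/n}$ are estimated in the correct directions so that (iii) emerges in the exact form stated.
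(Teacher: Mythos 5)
Your proposal is correct and follows essentially the same argument as the paper: near-extremal unit-volume domains for each $\lambda^*_{i_j}(n)$ (resp. $\mu^*_{i_j}(n)$) are dilated so as to share a common eigenvalue, and counting the eigenvalues of the disjoint union at or below (resp. strictly below) that common value yields the subadditivity (resp. superadditivity), with the equality cases handled by the same bookkeeping. The only difference is a cosmetic choice of normalization: you fix the total volume to be $1$ and let the common eigenvalue $\Lambda$ float, whereas the paper fixes the common eigenvalue at $\lambda^*_k(n)$, deduces $|\Omega|\ge 1$ (resp. $|\Omega|\le 1$), and rescales at the end --- the two are related by a single global dilation.
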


\begin{proof}
Let $\varepsilon$ be any positive real number. For each $j\le p$, let $C_j$ be a domain of volume 1  satisfying  $${\l}^*_{i_j}(n)\le{\l}_{i_j}( C_j) \le (1+\varepsilon){\l}^*_{i_j}(n)$$ and set $B_j=\left( {\l}_{i_j}( C_j)/ {\l}^*_{k}(n)\right)^{\frac 1 2} C_j $ so that  
$${\l}_{i_j}( B_j)= {\l}^*_{k}(n)\quad \mbox{and} \quad |B_j| = \left( {\l}_{i_j}( C_j)/ {\l}^*_{k}(n)\right)^{\frac n 2}.$$

 One can assume w.l.o.g. that the domains $B_1,  \cdots, B_p$ are mutually disjoint. 
Let us introduce  the domain $\Omega=B_1\cup \cdots\cup B_p$. Since for every $j\le p$, ${\l}_{i_j}( B_j)= {\l}^*_{k}(n)$ and since the spectrum of $\Omega$ is the union of the spectra of the $B_j$'s, one has 
$$\#\left\{l\in\N^*\ ;\ {\l}_{l}( \Omega)\le {\l}^*_{k}(n)\right\} = \sum_{j=1}^p\#\left\{l\in\N^*\ ;\ {\l}_{l}( B_j)\le {\l}^*_{k}(n)\right\} \ge \sum_{j=1}^p i_j =k.$$
Thus, ${\l}_{k}(\Omega)\le{\l}^*_{k}(n)$. Since ${\l}^*_{k}(n)\le {\l}_{k}(\Omega) |\Omega|^{\frac  2n}$, the volume of $\Omega$ should be greater than or equal to 1. Consequently,
\begin{equation}\label{dir'}
 1\le |\Omega| =\sum_{j\le p}|B_j|= \frac 1{{\l}^*_{k}(n)^{\frac n 2}}\sum_{j\le p} {\l}_{i_j}( C_j)^{\frac n 2}\le \frac {(1+\varepsilon)^{\frac n 2}}{{\l}^*_{k}(n)^{\frac n 2}}\sum_{j\le p} {\l}^*_{i_j}(n)^{\frac n 2} .
 \end{equation}
Inequality \eqref{dir} follows immediately from \eqref{dir'} since $\varepsilon$ can be  arbitrarily small. 

Assume now that the equality holds in \eqref{dir} and consider for each positive $\varepsilon$, a  family $B_1, B_2, \cdots, B_p$ constructed as above. Using \eqref{dir'}, one sees that the domain $\Omega=B_1\cup B_2\cup \cdots\cup B_p$ satisfies $1\le |\Omega|\le {(1+\varepsilon)^{\frac n 2}}$ and it is easy to check that  the domains $A_j:=|\Omega|^{-\frac 1n}B_j$, $j\le p$, 
satisfy  the properties (ii) and (iii) of the statement (indeed, $|A_j|=\frac {|B_j|}{|\Omega|}$ with $ \left( {\l}^*_{i_j}(n)/ {\l}^*_{k}(n)\right)^{\frac n 2}\le |B_j|\le \left( (1+\varepsilon){\l}^*_{i_j}(n)/ {\l}^*_{k}(n)\right)^{\frac n 2} $).  As for (i), one has for each $j\le p$, ${\l}_{i_j}( A_j)= |\Omega|^{\frac 2n}{\l}^*_{k}(n)$. Since $k=i_1+ i_2+ \cdots + i_p$, one deduces that  ${\l}_{k} (A_1\cup A_2\cup \cdots\cup A_p)=|\Omega|^{\frac 2n}{\l}^*_{k}(n)\le (1+\varepsilon){\l}^*_{k} (n)$. 

\medskip

The proof  in the Neumann case follows the same outline. Indeed, for any positive $\varepsilon$, we consider $p$ mutually disjoint domains $C_1, C_2, \cdots, C_p$ of volume 1 such that, $\forall j\le p$,  $${\mu}^*_{i_j}(n)\ge{\mu}_{i_j}( C_j) \ge (1-\varepsilon){\mu}^*_{i_j}(n)$$ and set $B_j=\left( {\mu}_{i_j}( C_j)/ {\mu}^*_{k}(n)\right)^{\frac 1 2} C_j $ and  $\Omega=B_1\cup B_2\cup \cdots\cup B_p$.  
Since for every $j\le p$, ${\mu}_{i_j}( B_j)= {\mu}^*_{k}(n)$, the number of eigenvalues of $B_j$ that are \emph{strictly} less than ${\mu}^*_{k}(n)$ is at most $i_j$ (recall that ${\mu}_{i_j}( B_j)$ denotes the $(i_j+1 )$-th eigenvalue of $ B_j$). As the spectrum of $\Omega$ is the union of the spectra of the $B_j$'s, it is clear that the number of eigenvalues of $\Omega$ that are strictly less than ${\mu}^*_{k}(n)$ is at most $k=i_1+ i_2+ \cdots + i_p$. Thus, ${\mu}_{k}(\Omega)\ge{\mu}^*_{k}(n)$ which implies (since ${\mu}^*_{k}(n)\ge {\mu}_{k}(\Omega)|\Omega|^{\frac 2n}$) that the volume of $\Omega$ is less than or equal to 1.  To derive Inequality \eqref{neu} it suffices to observe  that $1\ge |\Omega| =\sum_{j\le p}|B_j|$ and that $|B_j| = \left( {\mu}_{i_j}( C_j)/ {\mu}^*_{k}(n)\right)^{\frac n 2}\ge {(1-\varepsilon)^{\frac n 2}}\frac  {{\mu}^*_{i_j}(n)^{\frac n 2}}{{\mu}^*_{k}(n)^{\frac n 2}} .
$

Assume now that the equality holds in \eqref{neu} and consider for each positive $\varepsilon$, a  family $B_1, B_2, \cdots, B_p$ constructed as above. The domain $\Omega=B_1\cup B_2\cup \cdots\cup B_p$ satisfies $1\ge |\Omega|\ge {(1-\varepsilon)^{\frac n 2}}$ and it is easy to check that  the domains $A_j:=|\Omega|^{-\frac 1n}B_j$, $j\le p$, 
satisfy  the properties (ii) and (iii) of the statement (indeed, $|A_j|=\frac {|B_j|}{|\Omega|}$ with $\left( (1-\varepsilon){\mu}^*_{i_j}(n)/ {\mu}^*_{k}(n)\right)^{\frac n 2} \le |B_j|\le \left( {\mu}^*_{i_j}(n)/ {\mu}^*_{k}(n)\right)^{\frac n 2} $).  Moreover, one has for each $j\le p$, ${\mu}_{i_j}( A_j)= |\Omega|^{\frac 2n}{\mu}^*_{k}(n)$. Thus, ${\mu}_{k} (A_1\cup A_2\cup \cdots\cup A_p)=|\Omega|^{\frac 2n}{\mu}^*_{k}(n)\ge (1-\varepsilon){\mu}_{k} $ which proves (i). 

\end{proof}

\begin{cor}\label{successivegap}
For every $n\ge 2$ and every $k\ge 1$, we have 
$$
{\l}^*_{k+1}(n)^{n/2}-{\l}^*_k(n)^{n/2} \le{\l}^*_1(n)^{n/2}= j^n_{\frac n2-1,1} \omega_n
$$
and
$$
{\mu}^*_{k+1}(n)^{n/2}-{\mu}^*_k(n)^{n/2} \ge{\mu}^*_1(n)^{n/2}=p^n_{\frac n2,1} \omega_n.
$$

\end{cor}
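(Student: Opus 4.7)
The plan is to deduce this corollary directly from Theorem \ref{thm dirandneu} by specializing the partition. Take $k+1$ as the target index, and split it as $k+1 = k + 1$; that is, apply Theorem \ref{thm dirandneu} with $p = 2$, $i_1 = k$, $i_2 = 1$.

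For the Dirichlet statement, inequality \eqref{dir} yields immediately
$${\l}^*_{k+1}(n)^{n/2} \le {\l}^*_k(n)^{n/2} + {\l}^*_1(n)^{n/2},$$
which, after rearrangement, gives the bound
$${\l}^*_{k+1}(n)^{n/2} - {\l}^*_k(n)^{n/2} \le {\l}^*_1(n)^{n/2}.$$
For the Neumann statement, inequality \eqref{neu} applied to the same partition gives
$${\mu}^*_{k+1}(n)^{n/2} \ge {\mu}^*_k(n)^{n/2} + {\mu}^*_1(n)^{n/2},$$
which is the desired gap bound.

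To obtain the explicit numerical values, invoke the Faber--Krahn and Szeg\"o--Weinberger identities recalled in the introduction, namely ${\l}^*_1(n) = j^2_{\frac n2-1,1}\, \omega_n^{2/n}$ and ${\mu}^*_1(n) = p^2_{\frac n2,1}\, \omega_n^{2/n}$. Raising both to the power $n/2$ gives ${\l}^*_1(n)^{n/2} = j^n_{\frac n2-1,1}\, \omega_n$ and ${\mu}^*_1(n)^{n/2} = p^n_{\frac n2,1}\, \omega_n$, which are exactly the right-hand sides in the statement.

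There is no real obstacle here: the corollary is essentially a one-line consequence of the subadditivity/superadditivity theorem together with the classical values of the first Dirichlet and Neumann shape-optimal eigenvalues. The only thing worth double-checking is that the partition $i_1 = k$, $i_2 = 1$ satisfies the hypothesis $i_1 \le i_2 \le \cdots$ when $k \ge 2$; this is a purely cosmetic restriction since the inequalities \eqref{dir} and \eqref{neu} are symmetric in the indices $i_1,\dots,i_p$, so one may reorder freely. The case $k = 1$ is tautological, reducing to ${\l}^*_2(n)^{n/2} \le 2\,{\l}^*_1(n)^{n/2}$ and its Neumann analogue.
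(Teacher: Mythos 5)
Your proposal is correct and is exactly the argument the paper intends: the corollary is stated as an immediate consequence of Theorem \ref{thm dirandneu} applied to the partition $k+1 = 1 + k$, combined with the Faber--Krahn and Szeg\"o--Weinberger values of ${\l}^*_1(n)$ and ${\mu}^*_1(n)$. The remark about reordering the indices to respect $i_1\le i_2$ is a fine (if unnecessary) precaution.
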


\begin{rem}
(i) The first inequality in Corollary \ref{successivegap} is sharp for  $k=1$ since we know that  ${\l}^*_2(n)=2^{2/n}{\l}^*_1(n)$. 

\noindent (ii) In dimension 2, the inequalities of Corollary \ref{successivegap}  lead to 
 $${\l}^*_{k+1}(2)-{\l}^*_k(2) \le \pi  j_{0,1}^2\approx 18.168$$
 and
 $${\mu}^*_{k+1}(2)-{\mu}^*_k(2) \ge \pi p_{1,1}^2 \approx 10.65,$$
 which provides a simple tool to test the accuracy of  numerical approximations. 

\noindent (iii) Iterating the inequalities of Corollary \ref{successivegap}  we get 
$${\l}^*_k(n) \le  j^2_{\frac n2-1,1} \omega_n^{2/n} k^{2/n}
$$
and
$${\mu}^*_k(n) \ge p^2_{\frac n2,1} \omega_n^{2/n} k^{2/n}.$$
Combining these inequalities with P\'olya conjecture, we expect the following estimates
$$ p^2_{\frac n2,1} \omega_n^{2/n} k^{2/n}\le {\mu}^*_k(n)\le 4\pi^2 \left(\frac k{\omega_n} \right)^{\frac  2 n }\le {\l}^*_k(n) \le  j^2_{\frac n2-1,1} \omega_n^{2/n} k^{2/n}
$$
which take the following form in dimension 2 : 
$$  4\ \pi k \le {\l}^*_k(2) \le  5.784\ \pi k
$$
and $$  3.39\ \pi k\le {\mu}^*_k(2) \le 4\ \pi k .$$

\noindent (iv) Let $\Omega\subset \R^n$ be the union of $k$ balls of the same radius $r=(k\omega_n)^{-n}$ so that $|\Omega|=1$.
Then
$$
\lambda_k(\Omega)=\lambda_1(B^n)=\lambda_1(B^n)(k\omega_n)^{2/n},
$$
and
$$
\lambda_{k+1}(\Omega)=\lambda_2(B^n)=\lambda_2(B^n)(k\omega_n)^{2/n}.
$$
Thus,
$$
\lambda_{k+1}(\Omega)^{n/2}-\lambda_k(\Omega)^{n/2}=k\omega_n\left(\lambda_2(B^n)^{n/2}-\lambda_1(B^n)^{n/2}\right).
$$
This shows that the gap  $\lambda_{k+1}(\Omega)^{n/2}-\lambda_k(\Omega)^{n/2}$  cannot be  bounded independently of $k$ (see also Proposition \ref{gaps} below). Corollary \ref{successivegap} tells us that such a bound exists  when we consider the sequence of infima of $\l_k$.

\end{rem}

Thanks to Fekete's Lemma, 
the subadditivity of the sequence ${\l}^*_k(n)^{n/2}$ leads immediately to the following corollary. 
\begin{cor}\label{Fekete}
For every $n\ge 2$, the sequence $ \frac { {\l}^*_k(n)}{k^{2/n}}$ converges to a positive limit  with 
$$\lim_k\frac { {\l}^*_k(n)}{k^{2/n}} =\inf_k \frac { {\l}^*_k(n)}{k^{2/n}}.$$
In particular, the two following properties are equivalent :
\begin{enumerate}
\item (P\'olya's conjecture) For every $k\ge 1$ and every domain $\Omega \subset\R^n$, 
$$\l_k(\Omega)\ge 4\pi^2 (\vert \Omega\vert\omega_n)^{-2/n} k^{2/n}$$
\item $\displaystyle \lim_k\frac { {\l}^*_k(n)}{k^{2/n}} =4\pi^2 \omega_n^{-2/n} .$
\end{enumerate}

\end{cor}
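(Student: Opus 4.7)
The plan is to apply Fekete's Subadditive Lemma directly to the sequence $a_k := \lambda^*_k(n)^{n/2}$. By Theorem~\ref{thm dirandneu} applied with $p = 2$, $i_1 = k$, $i_2 = m$, this sequence satisfies $a_{k+m} \le a_k + a_m$ for every $k, m \ge 1$. Fekete's Lemma then yields
$$\lim_k \frac{a_k}{k} = \inf_k \frac{a_k}{k}.$$
Raising both sides to the $(2/n)$-th power, which is continuous on $(0, \infty)$, gives the claimed equality $\lim_k \lambda^*_k(n)/k^{2/n} = \inf_k \lambda^*_k(n)/k^{2/n}$. Positivity of the limit is immediate from the Berezin--Li--Yau bound cited in the introduction: $\lambda^*_k(n) \ge \frac{n}{n+2} 4\pi^2 \omega_n^{-2/n} k^{2/n}$, so the infimum is at least $\frac{n}{n+2} 4\pi^2 \omega_n^{-2/n} > 0$.

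For the equivalence (1)$\Leftrightarrow$(2), I would use Weyl's asymptotic law as the ``ceiling'' ingredient. Fix any bounded regular domain $\Omega_0 \subset \R^n$ of volume one (say a normalized ball). Weyl's formula gives $\lambda_k(\Omega_0) = 4\pi^2 \omega_n^{-2/n} k^{2/n} + o(k^{2/n})$, and since $\lambda^*_k(n) \le \lambda_k(\Omega_0)$ we obtain the unconditional bound $\limsup_k \lambda^*_k(n)/k^{2/n} \le 4\pi^2 \omega_n^{-2/n}$. If Pólya's conjecture (1) holds, the $\liminf$ is at least $4\pi^2 \omega_n^{-2/n}$, and (2) follows. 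Conversely, if (2) holds, then since the limit coincides with the infimum we have $\lambda^*_k(n) \ge 4\pi^2 \omega_n^{-2/n} k^{2/n}$ for every $k$, and the scaling identities \eqref{lambda} translate this into Pólya's inequality for arbitrary $\Omega$.

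There is essentially no obstacle here: the whole corollary is a formal consequence of Theorem~\ref{thm dirandneu} combined with two well-known ingredients (Fekete's Lemma and the Weyl asymptotic on a single fixed domain). The only conceptual point worth emphasizing is that subadditivity upgrades Pólya's conjectured \emph{pointwise} lower bound, which a priori is a statement about all $k$ simultaneously, to the single limiting statement (2); this is precisely what makes Fekete's Lemma the right tool.
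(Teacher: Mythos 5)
Your proposal is correct and follows exactly the route the paper intends: Fekete's Lemma applied to the subadditive sequence $\lambda^*_k(n)^{n/2}$ from Theorem \ref{thm dirandneu}, positivity from the Berezin--Li--Yau bound, the upper bound from Weyl's law on a single fixed domain, and the crucial use of $\lim=\inf$ together with the scaling identities \eqref{lambda} to pass from (2) back to the pointwise Pólya inequality. The paper states the corollary as an immediate consequence without writing out these details; your write-up supplies them correctly.
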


A similar result holds for the Neumann Laplacian eigenvalues. 

\medskip

The inequality \eqref{dir}  leads to
\begin{equation}\label{dir''}
{\l}^*_k(n)^{n/2}\le\inf_{1\le i\le k-1} \left\{{\l}^*_i(n)^{n/2}+{\l}^*_{k-i}(n)^{n/2}\right\}.
\end{equation}
Wolf and Keller \cite{WK} proved that if ${\l}_k$ is minimized by a non connected domain, that is  $ {\l}^*_k(n)={\l}_k(A \cup B)$ for a couple of disjoint domains $A$ and $B$ with $|A|>0$, $|B|>0$ and $|A|+|B| =1$, then the equality holds in \eqref{dir''} and, moreover, $A$ and $B$ are, up to normalizations,  minimizers of $\lambda_i$ and ${\l}_{k-i} $, respectively.  
The Neumann's analogue of this result has been established by Poliquin and Roy-Fortin \cite{Poliquin-Fortin}.

The following theorem shows how Wolf-Keller's result extends to ``almost minimizing" disconnected domains.

\begin{thm}\label{WKDIR} Let $k\ge 2$ and assume that there exists a non connected domain $\Omega =A \cup B$ in $\R^n$ with $|A|+|B| =1$,  $|A|> {\varepsilon}/{{\l}^*_k(n)^{n/2}}$, $|B|> {\varepsilon}/{{\l}^*_k(n)^{n/2}}$ and  
\begin{equation}\label{WKdir}
{\l}_k(A \cup B)^{n/2} \le {\l}^*_k(n)^{n/2} +\varepsilon
\end{equation}
for some $\varepsilon \ge 0 $.
Then there exists an integer $ i \in\{1,\cdots, k-1\} $ such that 
$$0\le \left\{ {\l}^*_i(n)^{n/2}+{\l}^*_{k-i}(n)^{n/2}\right\} - {\l}^*_k(n)^{n/2}  \le \varepsilon ,$$ 
$$0\le\l_i(A)^{n/2}|A| - {\l}^*_i(n)^{n/2}\le \varepsilon \; \;\; \mbox{and} \;\; \; 0\le{\l}_{k-i} (B)^{n/2}|B| - {\l}^*_{k-i}(n)^{n/2} \le 
\varepsilon. $$

\end{thm}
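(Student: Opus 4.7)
The approach rests on the elementary fact that the spectrum of the disjoint union $A\sqcup B$ is the sorted merge of the spectra of $A$ and $B$. Consequently there exist integers $i,j\ge 0$ with $i+j=k$ such that $\lambda_i(A)\le\lambda_k(A\cup B)$ and $\lambda_j(B)\le\lambda_k(A\cup B)$ (with the convention $\lambda_0=0$); I would take $i$ to be the largest admissible value, namely the number of eigenvalues of $A$ appearing among the first $k$ eigenvalues of the union. Multiplying these two inequalities by $|A|$ and $|B|$ respectively, adding, and using $|A|+|B|=1$ together with the hypothesis \eqref{WKdir} yields
\[
\lambda_i(A)^{n/2}|A|+\lambda_{k-i}(B)^{n/2}|B|\;\le\;\lambda_k(A\cup B)^{n/2}\;\le\;\lambda_k^*(n)^{n/2}+\varepsilon.
\]

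\textbf{Extracting the three inequalities.} Combining this chain with the definitional lower bounds $\lambda_i(A)^{n/2}|A|\ge\lambda_i^*(n)^{n/2}$ and $\lambda_{k-i}(B)^{n/2}|B|\ge\lambda_{k-i}^*(n)^{n/2}$, I immediately obtain $\lambda_i^*(n)^{n/2}+\lambda_{k-i}^*(n)^{n/2}\le\lambda_k^*(n)^{n/2}+\varepsilon$, while the subadditivity statement \eqref{dir} of Theorem \ref{thm dirandneu} delivers the matching lower bound $\lambda_i^*(n)^{n/2}+\lambda_{k-i}^*(n)^{n/2}\ge\lambda_k^*(n)^{n/2}$; together these yield the first assertion. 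For the second, I isolate $\lambda_i(A)^{n/2}|A|$ in the main chain, apply the $B$-inequality $\lambda_{k-i}(B)^{n/2}|B|\ge\lambda_{k-i}^*(n)^{n/2}$, and then subadditivity to get
\[
\lambda_i(A)^{n/2}|A|-\lambda_i^*(n)^{n/2}\;\le\;\lambda_k^*(n)^{n/2}+\varepsilon-\lambda_{k-i}^*(n)^{n/2}-\lambda_i^*(n)^{n/2}\;\le\;\varepsilon,
\]
and the bound on $B$ follows symmetrically. The lower bounds in the second and third assertions are immediate from the definition of $\lambda_i^*(n)$ and $\lambda_{k-i}^*(n)$.

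\textbf{Main obstacle.} The subtle step, and the only place where the volume hypotheses enter, is verifying that $i\in\{1,\ldots,k-1\}$. With the split chosen as above, the bad case $i=0$ forces $\lambda_k(A\cup B)=\lambda_k(B)$, and then $\lambda_k(B)^{n/2}|B|\ge\lambda_k^*(n)^{n/2}$ combined with $\lambda_k(A\cup B)^{n/2}\le\lambda_k^*(n)^{n/2}+\varepsilon$ gives
\[
|A|\;=\;1-|B|\;\le\;\frac{\varepsilon}{\lambda_k^*(n)^{n/2}+\varepsilon}\;<\;\frac{\varepsilon}{\lambda_k^*(n)^{n/2}},
\]
contradicting the hypothesis $|A|>\varepsilon/\lambda_k^*(n)^{n/2}$. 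The case $i=k$ (equivalently $j=0$) is ruled out symmetrically using the hypothesis on $|B|$. Thus $1\le i\le k-1$, completing the proof.
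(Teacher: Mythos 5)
Your proposal is correct and follows essentially the same route as the paper: identify the split $i+(k-i)$ of the first $k$ eigenvalues of the merged spectrum, derive $\lambda_i(A)^{n/2}|A|+\lambda_{k-i}(B)^{n/2}|B|\le\lambda_k(\Omega)^{n/2}\le\lambda_k^*(n)^{n/2}+\varepsilon$, combine with the definitional lower bounds and the subadditivity inequality \eqref{dir}, and use the volume hypotheses to exclude $i=0$ and $i=k$. The only cosmetic differences are that you define $i$ directly by counting rather than via the largest index with $\lambda_i(A)=\lambda_k(\Omega)$, and you present the second and third assertions as direct estimates rather than by contradiction; both are equivalent to the paper's argument.
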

\begin{proof}
Since the spectrum of $\Omega=A\cup B $ is the re-ordered union of the spectra of $A$ and $B$, the eigenvalue $\l_k(\Omega)$ belongs to the union of the spectra of $A$ and $B$ and, moreover, 
\begin{equation}\label{WKdir1}
 \#\left\{j\in \N^*\; ; \; \l_j(A)<\l_k(\Omega )\right\}+ \#\left\{j\in \N^*\; ; \; \l_j(B)<\l_k(\Omega )\right\} \le k-1
 \end{equation}
and
\begin{equation}\label{WKdir2}
 \#\left\{j\in \N^*\; ; \; \l_j(A)\le\l_k(\Omega)\right\}+ \#\left\{j\in \N^*\; ; \;  \l_j(B)\le\l_k(\Omega )\right\} \ge k .
 \end{equation}
 Hence, there exists at least one integer $j\in\{1,\dots, k\}$ such that $\l_j(A)=\l_k(\Omega )$ or $\l_j(B)=\l_k(\Omega )$. Assume that the first alternative occurs and let $i$ be the largest integer between $1$ and $k$ such that $\l_i(A)=\l_k(\Omega)$. 
 
 Observe first that $i\le k-1$. Indeed, if $\l_k(A)=\l_k(\Omega)$, then 
 $${\l}^*_k(n)^{n/2}\le\lambda_k (A)^{n/2}|A|=\l_k(\Omega)^{n/2}|A|\le \left({\l}^*_k(n)^{n/2}+\varepsilon\right)|A|$$
which implies  $ |A|\ge \frac {{\l}^*_k(n)^{n/2}}{{\l}^*_k(n)^{n/2}+\varepsilon}$ and, then
$|B|=1-|A|\le \frac {\varepsilon}{{\l}^*_k(n)^{n/2}+\varepsilon}\le \frac {\varepsilon}{{\l}^*_k(n)^{n/2}}$. This contradicts the volume assumptions of the theorem. 
 
On the other hand,  the maximality of $i$ means that $$\#\left\{j\in \N^*\; ; \; \l_j(A)\le\l_k(\Omega)\right\}=i$$ which implies, thanks  to \eqref{WKdir2}, 
 $\l_{k-i}(B)\le \l_k(\Omega )$. 
 Thus,  
\begin{equation}\label{WKdir3}
 \l_k(\Omega )^{n/2}=\l_k(\Omega )^{n/2}|A| +\l_k(\Omega )^{n/2} |B|\ge \l_i(A)^{n/2}|A|+\l_{k-i}(B)^{n/2}|B|. 
\end{equation}
Since $\l_i(A)^{n/2}|A|\ge {\l}^*_i(n)^{n/2}$ and $\l_{k-i}(B)^{n/2}|B|\ge {\l}^*_{k-i}(n)^{n/2}$, we have proved the inequality
$$
{\l}^*_k(n)^{n/2} +\varepsilon\ge \l_k(\Omega )^{n/2}\ge {\l}^*_i(n)^{n/2}+{\l}^*_{k-i}(n)^{n/2}. 
$$

Now, we necessarily have the inequality $\l_i(A)^{n/2}|A|\le {\l}^*_i(n)^{n/2}+\varepsilon$. Otherwise, we would have, thanks to \eqref{WKdir3} and Theorem  \ref{thm dirandneu},
\begin{eqnarray*}
\l_k(\Omega )^{n/2}\ge \l_i(A)^{n/2}|A|+\l_{k-i}(B)^{n/2}|B|  & > &  {\l}^*_i(n)^{n/2}+\varepsilon + {\l}^*_{k-i}(n)^{n/2}\\
&\ge& {\l}^*_k(n)^{n/2}+\varepsilon 
\end{eqnarray*}
which contradicts the assumption of the theorem. 
The same argument leads to the inequality ${\l}_{k-i} (B)^{n/2}|B|\le {\l}^*_{k-i}(n)^{n/2}+ 
\varepsilon $.
\end{proof}

\begin{rem}
(i) Taking $\varepsilon =0$ in Theorem \ref{WKDIR}, all the inequalities of the theorem become equalities and  we recover the result of Wolf and Keller. Notice that when $\varepsilon =0$, it is immediate to see that the integer $i$ is such that ${\l}^*_i(n)^{n/2}+ {\l}^*_{k-i}(n)^{n/2}$ is minimal. 

\medskip

\noindent (ii) The assumption that the volume of each of the components $A$ and $B$  of 
$\Omega$ is bounded below in terms of $\varepsilon$ is necessary to guarantee that the integer $i$ is different from $0$ and $k$ in Theorem \ref{WKDIR}. Indeed, take for $A$ a domain whose volume is almost equal to one and such that $\l_k(A)^{n/2}\le {\l}^*_k(n)^{n/2}+\varepsilon$, and take for $B$ a domain of small volume such that $\l_1(B)^{n/2}>{\l}^*_k(n)^{n/2}+\varepsilon$. The domain $\Omega=A\cup B$ would have volume one and   $\l_k(\Omega)=\l_k(A)<\l_1(B)$.

\end{rem}


Using similar arguments as in the proof of Theorem \ref{WKDIR} (see also the proof of 
 Theorem \ref{WKSURF}), we obtain the following 
 
 \begin{thm}\label{WKNEU} Let $k\ge 2$ and assume that there exists a non connected domain $\Omega =A \cup B$ in $\R^n$ with $|A|+|B| =1$ and  
\begin{equation}\label{WKneu}
{\mu}_k(A \cup B)^{n/2} \ge {\mu}^*_k(n)^{n/2} -\varepsilon
\end{equation}
for some $\varepsilon \ge 0 $.
Then there exists an integer $ i \in\{1,\cdots, k-1\} $ such that 
$$0\le {\mu}^*_k(n)^{n/2} -\left[{\mu}^*_i(n)^{n/2}+{\mu}^*_{k-i}(n)^{n/2} \right]\le \varepsilon ,$$ 
$$0\le {\mu}^*_i(n)^{n/2} - \mu_i(A)^{n/2}|A|\le \varepsilon \; \;\; \mbox{and} \;\; \; 0\le  {\mu}^*_{k-i}(n)^{n/2} - {\mu}_{k-i} (B)^{n/2}|B|\le
\varepsilon. $$

\end{thm}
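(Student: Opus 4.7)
The plan is to mirror the proof of Theorem \ref{WKDIR} with inequalities reversed, exploiting a feature of the Neumann spectrum that replaces the lower volume hypothesis of the Dirichlet case. The three ingredients are the superadditivity inequality \eqref{neu} from Theorem \ref{thm dirandneu}, the definitional bound $\mu_j(U)|U|^{2/n}\le {\mu}^*_j(n)$, and a counting argument on the spectrum of $\Omega=A\cup B$.

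First I would set
\[
a=\#\{j\in\N\ :\ \mu_j(A)<\mu_k(\Omega)\},\qquad b=\#\{j\in\N\ :\ \mu_j(B)<\mu_k(\Omega)\}.
\]
Since the spectrum of $\Omega$ is the reordered union of the spectra of $A$ and of $B$, one has $a+b\le k$. Each component contributes the eigenvalue $0$, and the hypothesis of the theorem is non-trivial only when $\mu_k(\Omega)>0$; this forces $a\ge 1$ and $b\ge 1$, and is precisely the Neumann replacement of the volume assumption made in Theorem \ref{WKDIR}. Setting $i:=a\in\{1,\dots,k-1\}$, one gets $\mu_i(A)\ge\mu_k(\Omega)$ by the definition of $a$, and $\mu_{k-i}(B)\ge\mu_b(B)\ge\mu_k(\Omega)$ because $k-i\ge b$.

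Using $|A|+|B|=1$ I would then split $\mu_k(\Omega)^{n/2}=|A|\mu_k(\Omega)^{n/2}+|B|\mu_k(\Omega)^{n/2}$, insert the two lower bounds on $\mu_i(A)$ and $\mu_{k-i}(B)$ just obtained, bound each resulting term via \eqref{mu}, and invoke \eqref{neu} applied to the pair $(i,k-i)$, producing the chain
\begin{align*}
{\mu}^*_k(n)^{n/2}-\varepsilon &\le \mu_k(\Omega)^{n/2}\le \mu_i(A)^{n/2}|A|+\mu_{k-i}(B)^{n/2}|B| \\
&\le {\mu}^*_i(n)^{n/2}+{\mu}^*_{k-i}(n)^{n/2}\le {\mu}^*_k(n)^{n/2}.
\end{align*}
The first conclusion of the theorem is the comparison of the outer endpoints of this chain. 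The two individual bounds follow by subtracting ${\mu}^*_{k-i}(n)^{n/2}\ge\mu_{k-i}(B)^{n/2}|B|$ (respectively ${\mu}^*_i(n)^{n/2}\ge\mu_i(A)^{n/2}|A|$) from the chain, exactly as in the Dirichlet argument.

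The step requiring the most care, and what I regard as the only subtle point, is the verification that $i\in\{1,\dots,k-1\}$. Unlike the Dirichlet situation, no quantitative lower bound on $|A|$ or $|B|$ is needed: the zero eigenvalue of each component automatically gives $a,b\ge 1$ as soon as $\mu_k(\Omega)>0$, and the degenerate case $\mu_k(\Omega)=0$ is implicitly excluded by the hypothesis being meaningful (equivalently $\varepsilon<{\mu}^*_k(n)^{n/2}$). Apart from this bookkeeping, the proof is a direct reversal of the Dirichlet one.
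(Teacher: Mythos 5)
Your proof is correct and follows essentially the same route the paper intends: the paper only sketches this theorem by reference to Theorems \ref{WKDIR} and \ref{WKSURF}, whose arguments are exactly your counting of eigenvalues below $\mu_k(\Omega)$ in the reordered union, the split $\mu_k(\Omega)^{n/2}=|A|\mu_k(\Omega)^{n/2}+|B|\mu_k(\Omega)^{n/2}$, the definitional bounds from \eqref{mu}, and superadditivity \eqref{neu}. Your choice $i=a$ and the observation that the zero Neumann eigenvalue of each component forces $a,b\ge 1$ (replacing the Dirichlet volume hypothesis) are a clean and accurate rendering of why the statement needs no volume assumption.
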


\begin{rem}
(i) Taking $\varepsilon =0$ in Theorem \ref{WKNEU}, all the inequalities of the theorem become equalities and the integer $i$ is necessarily such that ${\mu}^*_i(n)^{n/2}+ {\mu}^*_{k-i}(n)^{n/2}$ is maximal. 

\medskip

\noindent (ii) A consequence of Theorem \ref{WKNEU} is that if for some $\varepsilon >0$, there exists a domain $\Omega$ in $\R^n$ with
$$\mu_k(\Omega)^{n/2}> \sup_{1\le i\le k-1} \left\{{\mu}^*_i(n)^{n/2}+ {\mu}^*_{k-i}(n)^{n/2}\right\} +\varepsilon ,$$
then ${\mu}^*_k(n)$ cannot be approached up to $\varepsilon$ by a non connected domain.

\end{rem}

The following properties are likely well known, we show them here for completeness and comparison with other results in this section.

\begin{prop}\label{gaps}
For every $n\ge 2$ and $k\ge 1$   we have 
\begin{equation}\label{infgapdir}
\inf\{\l_{k}(\Omega)-\l_1(\Omega) :\; \Omega\subset \R^n, \; \vert \Omega\vert=1 \}=0\; ;
\end{equation}
\begin{equation}\label{supgapdir}
\sup\{\l_{k+1}(\Omega)-\l_k(\Omega)\; :\; \Omega\subset \R^n, \; \vert \Omega\vert=1\}=\infty \; ;
\end{equation}
\begin{equation}\label{infgapneu}
\inf\{\mu_{k}(\Omega)-\mu_1(\Omega) :\; \Omega\subset \R^n, \; \vert \Omega\vert=1 \}=0\ ;
\end{equation}
\begin{equation}\label{supgapneu}
{\mu}^*_{1}(n)(k+1)^{\frac 2 n}\le\sup\{\mu_{k+1}(\Omega)-\mu_k(\Omega)\; :\; \Omega\subset \R^n, \; \vert \Omega\vert=1\}\le {\mu}^*_{k+1}(n).
 \end{equation}
\end{prop}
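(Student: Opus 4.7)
The plan is to handle the four identities separately; items (1), (3), and (4) admit short explicit constructions using disjoint unions of equal balls, while (2) requires a more delicate spectral-engineering argument.

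For (1), I would take $\Omega_k$ to be a disjoint union of $k$ pairwise congruent open balls of volume $1/k$. Since the spectrum of a disjoint union is the union with multiplicities of the individual spectra, the common first Dirichlet eigenvalue of the components appears $k$ times, so $\lambda_1(\Omega_k) = \cdots = \lambda_k(\Omega_k)$ and $\lambda_k(\Omega_k) - \lambda_1(\Omega_k) = 0$. For (3) and the lower bound in (4), I would use the same idea on the Neumann side: take $\Omega$ to be $k+1$ congruent open balls of volume $1/(k+1)$. Because the multiplicity of the Neumann eigenvalue $0$ equals the number of connected components, one has $\mu_0(\Omega) = \mu_1(\Omega) = \cdots = \mu_k(\Omega) = 0$, which gives (3) and also shows $\mu_{k+1}(\Omega) - \mu_k(\Omega) = \mu_1(\text{ball of volume }1/(k+1)) = \mu^*_1(n)(k+1)^{2/n}$, establishing the lower bound of (4). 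The upper bound in (4) is immediate from $\mu_{k+1}(\Omega) - \mu_k(\Omega) \le \mu_{k+1}(\Omega) \le \mu^*_{k+1}(n)$.

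The substantive part is (2), where I would construct a family of domains of unit volume along which $\lambda_{k+1} - \lambda_k \to \infty$. The idea is to exploit the banded structure of the Dirichlet spectrum of a thin rectangle. In $\mathbb{R}^2$, let $R_2 = (0,L_2) \times (0,L_2/s)$ with the aspect ratio $s$ taken large enough (concretely $3s^2 > (k+1)^2-1$) that the first $k+1$ eigenvalues of $R_2$ all lie in its ``first band,'' namely $\pi^2(j^2 + s^2)/L_2^2$ for $j = 1, \dots, k+1$, separated by a gap $\pi^2(2k+1)/L_2^2$ between the $k$-th and $(k+1)$-th. Next, let $R_1 = (0,L_1) \times (0,\varepsilon_1)$ be a second thin rectangle with $\varepsilon_1 = L_2/\sqrt{s^2 + k(k+1)}$, so that its smallest eigenvalue $\lambda_1(R_1) = \pi^2(1/L_1^2 + 1/\varepsilon_1^2)$ lies strictly inside the interval $(\lambda_k(R_2), \lambda_{k+1}(R_2))$, and choose $L_1$ by the volume condition $L_1\varepsilon_1 = 1 - L_2^2/s$. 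Setting $\Omega_{L_2} = R_1 \sqcup R_2$, a direct check shows $\lambda_k(\Omega_{L_2}) = \lambda_k(R_2) = \pi^2(k^2+s^2)/L_2^2$ and $\lambda_{k+1}(\Omega_{L_2}) = \lambda_1(R_1)$, so that the gap equals $\pi^2 k/L_2^2 + O(L_2^2)$, which tends to infinity as $L_2 \to 0$. For $n \ge 3$ one runs the analogous construction with $R_2 = (0,L_2) \times (0, L_2/s)^{n-1}$ and $R_1$ of the same form, the calculation being essentially identical.

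The main obstacle is the bookkeeping needed for (2): one must check that neither any higher-band eigenvalue of $R_2$ (starting from $\pi^2(1 + 4s^2)/L_2^2$) nor any of the many eigenvalues of $R_1$ clustered near $\pi^2/\varepsilon_1^2$ intrudes into the interval $(\lambda_k(R_2),\lambda_1(R_1))$. Both facts follow from the aspect-ratio condition $3s^2 > (k+1)^2 - 1$ together with the explicit choice of $\varepsilon_1$, together with the observation that $L_1/\varepsilon_1 \to \infty$ as $L_2 \to 0$ so the approximation $\lambda_1(R_1) \approx \pi^2/\varepsilon_1^2$ is valid.
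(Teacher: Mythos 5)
Your proposal is correct. For \eqref{infgapdir}, \eqref{infgapneu} and \eqref{supgapneu} you use exactly the paper's construction (disjoint unions of congruent balls, plus the trivial bound $\mu_{k+1}-\mu_k\le\mu_{k+1}\le\mu^*_{k+1}(n)$); the only cosmetic difference is that the paper speaks of a domain ``modeled on'' the union of balls with ``almost equal'' eigenvalues, hedging toward connected or perturbed domains, whereas you take the literal disjoint union and get exact equalities, which is legitimate since the statement does not require connectivity and the paper itself works freely with disconnected domains elsewhere. For \eqref{supgapdir} your route is genuinely different: the paper takes an arbitrary domain $D$ with $\l_{k+1}(D)>\l_k(D)$, shrinks it so the gap blows up, and restores unit volume by attaching a long thin appendage, which implicitly invokes a spectral-stability result for such perturbations that is asserted rather than proved; you instead build an explicit disconnected domain $R_1\sqcup R_2$ from two boxes whose spectra are known in closed form, so every inequality is checked by arithmetic. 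Your approach trades the perturbation lemma for explicit bookkeeping, and in fact the bookkeeping is lighter than you fear: since every eigenvalue of $R_1$ is $\ge\l_1(R_1)$ and every eigenvalue of $R_2$ beyond the $k$-th is $\ge\l_{k+1}(R_2)>\l_1(R_1)$, the clustering of the higher eigenvalues of $R_1$ near $\pi^2/\varepsilon_1^2$ is irrelevant to identifying $\l_k(\Omega)=\l_k(R_2)$ and $\l_{k+1}(\Omega)=\l_1(R_1)$; one could even simplify further by placing $\l_1(R_1)$ above $\l_{k+1}(R_2)$, which yields the gap $\pi^2(2k+1)/L_2^2\to\infty$ directly.
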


\begin{proof} To see \eqref{infgapdir} it suffices to  consider a domain $\Omega$ modeled on the disjoint union of $k+1$ identical balls of volume $\frac 1{k+1}$. The $k+1$ first Dirichlet eigenvalues of such a domain are almost equal.

  Now, 
take any domain $D$ with $\l_{k+1}(D)-\l_k(D)>0$ and observe that   
$ \l_{k+1}(t D)-\l_k(t D) \to +\infty$ as $t\to 0$. 
Then attach to the domain $tD$ a sufficiently long and thin domain in order to obtain a volume $1$ domain $\Omega(t)$ with $\l_k(\Omega (t))\approx\l_k(t D)$ and $\l_{k+1}(\Omega (t))\approx\l_{k+1}(t D)$ (recall that the first eigenvalue of a box of volume 1 goes to infinity as the length of one of its sides becomes very small). Thus, $ \l_{k+1}(\Omega (t))-\l_k(\Omega (t))$ goes to infinity as $t\to 0$ which proves \eqref{supgapdir}.

  As for the Neumann eigenvalues of a domain $\Omega$ modeled on the disjoint union of $k+1$ identical balls of volume $\frac 1{k+1}$, one has $\mu_0(\Omega)=0$ and $\mu_1(\Omega),\cdots ,\mu_k(\Omega)$ are almost equal to zero, while $\mu_{k+1}(\Omega)$ is almost equal to the first positive eigenvalue of one of the balls, that is $\mu_{k+1}(\Omega)\approx {\mu}^*_{1}(n)(k+1)^{\frac 2 n}$. This example proves \eqref{infgapneu} and \eqref{supgapneu}.


\end{proof}

\section{Eigenvalues of closed surfaces}
There are two equivalent approaches to introduce the extremal eigenvalues  on closed surfaces. 

Let us start with the ``embedded" point of view. Indeed, 
if $S$ is a  compact connected surface of the 3-dimensional Euclidean space $\R^3$, we  consider on it the Dirichlet's energy functional associated with the tangential gradient, and denote by
$$
 0 = \nu_0(S) < \nu_1(S) \leq \nu_2(S) \leq \cdots \leq \nu_k(S) \leq \cdots .
$$
 the spectrum of the corresponding Laplacian.  
 According to \cite[Theorem 1.4]{CDE1}, one has, $\forall k\ge 1$,
 $$\sup_{\vert S\vert=1}\nu_k(S) =+\infty.$$
However, it is known since the work of Korevaar \cite{K} that for every  integer  $\gamma \ge 0$, the $k$-th eigenvalue  $\nu_k$  is bounded above on the set of compact  surfaces of genus $\gamma$. Thus,   for every  integer  $\gamma \ge 0$ we denote by $\mathcal M (\gamma)$  the set of all compact  surfaces of genus $\gamma$ embedded in $\R^3$ and define the sequence 
$$  \nu^*_k (\gamma)=\sup \left\{\nu_k(S)\; ; \; S\in \mathcal M (\gamma) \mbox{ and }|S|=1\right\}=\sup_{S\in \mathcal M (\gamma)}\nu_k(S)|S|,$$
where $|S|$ stands for the area of $S$. 
Regarding the infimum, it is well known that  $\inf_{S\in \mathcal M (\gamma)}\nu_k(S)|S|=0$.

\medskip

Alternatively, let $\Sigma_{\gamma}$ be an abstract closed orientable 2-dimensional smooth manifold of genus $\gamma$. To every Riemannian metric $g$ on $\Sigma_{\gamma}$ we  associate the sequence of eigenvalues of the Laplace-Beltrami operator $\Delta_g$ 
$$
 0 = \nu_0(\Sigma_{\gamma},g) < \nu_1(\Sigma_{\gamma},g) \leq \nu_2(\Sigma_{\gamma},g) \leq \cdots \leq \nu_k(\Sigma_{\gamma},g) \leq \cdots .
$$
Notice that for every positive number $t$, one has $\nu_k(\Sigma_{\gamma},tg) = t^{-1}\nu_k(\Sigma_{\gamma},g)$ while the Riemannian area satisfies $|(\Sigma_{\gamma},tg)|=t|(\Sigma_{\gamma},g)|$ so that the product 
$\nu_k (\Sigma_{\gamma},g) |(\Sigma_{\gamma},g)|$ is invariant under scaling of the metric. 

 \begin{lemme}\label{C1-isometric}
 Let $\Sigma_{\gamma}$ be a closed orientable 2-dimensional smooth manifold of genus $\gamma \ge 0$ and denote by $ \mathcal R (\Sigma_\gamma)$ the set of all Riemannian metrics on $\Sigma_\gamma$. For every positive integer $k$ one has
 \begin{eqnarray*} 
 \nu^*_k (\gamma) &=& \sup \left\{\nu_k (\Sigma_{\gamma},g)\, ; \, g\in \mathcal R (\Sigma_\gamma)  \mbox{ and }|(\Sigma_{\gamma},g)|=1\right\} \\
& = &\sup_{g\in\mathcal R (\Sigma_\gamma) } \nu_k (\Sigma_{\gamma},g)\vert(\Sigma_{\gamma},g)\vert.
 \end{eqnarray*} 
 \end{lemme}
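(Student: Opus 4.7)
The plan is to prove the two inequalities of the lemma separately; the nontrivial direction is an application of the Nash--Kuiper $C^1$ isometric embedding theorem.

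The inequality $\nu^*_k(\gamma) \le \sup_{g \in \mathcal R(\Sigma_\gamma)} \nu_k(\Sigma_\gamma, g)\, |(\Sigma_\gamma, g)|$ is tautological: any $S \in \mathcal M(\gamma)$ is diffeomorphic to $\Sigma_\gamma$, and pulling back the induced Euclidean metric on $S$ by such a diffeomorphism yields a Riemannian metric $g$ on $\Sigma_\gamma$ whose Laplace--Beltrami spectrum coincides with the spectrum of the tangential Laplacian on $S$, while $|(\Sigma_\gamma, g)| = |S|$.

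For the reverse inequality, fix $g \in \mathcal R(\Sigma_\gamma)$ with $|(\Sigma_\gamma, g)| = 1$; the task is to construct $S \in \mathcal M(\gamma)$ with $|S| = 1$ and $\nu_k(S) = \nu_k(\Sigma_\gamma, g)$. Start with any smooth embedding $\iota_0 : \Sigma_\gamma \hookrightarrow \R^3$, and rescale it by a small factor $\tau > 0$ so that the induced metric $\tau^2 \iota_0^{*}\,\mathrm{eucl}$ is pointwise strictly dominated by $g$. The Nash--Kuiper theorem then produces a $C^1$ map $\iota : \Sigma_\gamma \to \R^3$ that is isometric for $g$ and arbitrarily close to $\tau \iota_0$ in the $C^0$ topology. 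By taking the $C^0$-approximation tight enough to preserve injectivity, $\iota$ is an embedding and $S := \iota(\Sigma_\gamma) \in \mathcal M(\gamma)$ has area one. Because $\iota$ is a $C^1$ isometry, the tangential Dirichlet energy on $S$ pulls back exactly to the $g$-Dirichlet energy on $\Sigma_\gamma$, so the variational min-max characterization
$$
\nu_k = \inf_{V \subset H^1,\ \dim V = k+1}\ \sup_{u \in V \setminus \{0\}} \frac{\int |\nabla u|^2\, dv}{\int u^2\, dv}
$$
yields $\nu_k(S) = \nu_k(\Sigma_\gamma, g)$.

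The main obstacle is a regularity subtlety: Nash--Kuiper produces $C^1$, not $C^\infty$, embeddings. If $\mathcal M(\gamma)$ is understood to include $C^1$ surfaces (with the tangential Laplacian defined variationally on $H^1(S)$), the argument is complete as stated. Otherwise, one must insert an extra approximation step: approximate $g$ by a sequence of smooth metrics $g_n$ induced from smooth embeddings $\iota_n$ into $\R^3$ with $g_n \to g$ in $C^0$, and conclude via the continuity of $\nu_k(\Sigma_\gamma, \cdot)$ under $C^0$ perturbations of the metric, which is immediate from the Rayleigh quotient formula displayed above.
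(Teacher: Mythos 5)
Your proposal is correct and follows essentially the same route as the paper: the trivial inequality by pulling back the induced metric, and the reverse inequality via Kuiper's $C^1$ isometric embedding theorem followed by smoothing and the stability of $\nu_k$ under $C^0$ (quasi-isometric) perturbations of the metric, which is exactly how the paper argues (it takes smooth embeddings $C^1$-close to the Kuiper embedding, so the surfaces stay in $\mathcal M(\gamma)$ and the supremum is approached rather than attained). Two small touch-ups: injectivity of the Nash--Kuiper map is part of the theorem's statement (short \emph{embeddings} are approximated by isometric \emph{embeddings}) rather than a consequence of $C^0$-closeness, which by itself does not preserve injectivity; and the smooth metrics $g_n\to g$ in your final step should be obtained explicitly by $C^1$-approximating the $C^1$ isometric embedding by smooth embeddings, since $C^1$-closeness is what makes the induced metrics converge in $C^0$.
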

 \begin{proof}
 Let us first recall the well-known fact (see e.g.   Dodziuk's paper  \cite{D}) that if two Riemannian metrics $g_1$, $g_2$ on a compact manifold $M$ of dimension $m$ are quasi-isometric with a quasi-isometry ratio close to $1$, then the spectra of their Laplacians  are  close. More precisely, we say that $g_1$ and $g_2$ are $\alpha$-quasi-isometric, with $\alpha \ge 1$, if for each $v\in TM$, $v\not =0$, we have
$$
\frac{1}{\alpha^2}\le \frac{g_1(v,v)}{g_2(v,v)} \le \alpha^2.
$$
The spectra of $g_1$ and $g_2$ then satisfy, $\forall k\ge 1$, 
 \begin{eqnarray} \label{quasi-eigenvalue}
\frac{1}{\alpha^{2(m+1)}}\le \frac{\nu_k(M,g_1)}{\nu_k(M,g_2)} \le \alpha^{2(m+1)}
\end{eqnarray} 
while the ratio of  their volumes is so that
 \begin{eqnarray} \label{quasi-volume}
\frac{1}{\alpha^m}\le \frac{\vert(M,g_1)\vert}{\vert (M,g_2)\vert} \le \alpha^m.
\end{eqnarray} 
 Now, any surface $S\in \mathcal M (\gamma)$ is of the form 
 $S=\phi (\Sigma_\gamma) $,
 where $\phi:\Sigma_\gamma\to \R^3$ is a smooth embedding. Denoting by $g_\phi$ the Riemannian metric on $\Sigma_\gamma$ defined as the pull back by $\phi$ of the Euclidean metric of $\R^3$, one clearly has
 $$\nu_k(S)=\nu_k (\Sigma_{\gamma},g_\phi) \qquad \mbox{ and } \qquad |S|=|(\Sigma_{\gamma},g_\phi)|.$$
 This immediately shows that $ \nu^*_k (\gamma) \le\sup_{g\in\mathcal R (\Sigma_\gamma) } \nu_k (\Sigma_{\gamma},g)\vert(\Sigma_{\gamma},g)\vert.$
 
 Conversely, given any Riemannian metric $g\in\mathcal R (\Sigma_\gamma)$, it is  well  known that there exists a $C^1$-isometric embedding $\phi$ from $(\Sigma_{\gamma},g)$ into $\R^3$ (see \cite{Ku}). Using standard density results, there exists a sequence $\phi_n:\Sigma_\gamma\to \R^3$ of smooth embeddings that converges to $\phi$ with respect to the $C^1$-topology. 
The metrics $g_n=g_{\phi_n } $ induced by $\phi_n$ are  quasi-isometric to $g$ and the corresponding  sequence of quasi-isometry ratios converges to 1.   Therefore, using  \eqref{quasi-eigenvalue} and \eqref{quasi-volume}, $\lim_n \nu_k (\Sigma_{\gamma},g_n)=  \nu_k (\Sigma_{\gamma},g) $ and $\lim_n \vert(\Sigma_{\gamma},g_n)\vert =\vert(\Sigma_{\gamma},g)\vert$.  Hence,
the sequence of surfaces $S_n=\phi_n (\Sigma_\gamma) \in \mathcal M (\gamma)$  satisfies 
 $$\lim_n \nu_k (S_n) \vert S_n\vert=\lim_n \nu_k (\Sigma_{\gamma},g_n)\vert(\Sigma_{\gamma},g_n)\vert  =  \nu_k (\Sigma_{\gamma},g) \vert(\Sigma_{\gamma},g)\vert .$$
This completes the proof of the Lemma.  
\end{proof}

 
 It is known that $\nu_1^*(0)=\nu_1(\mathbb{S}^2, g_{s})=8\pi$, where $ g_{s}$ is  the standard metric of the sphere (see \cite {H}),   $\nu_1^*(1)=\nu_1(\mathbb{T}^2, g_{hex})={8\pi^2\over {\sqrt{3}}}$ , where $g_{hex}$ is the flat metric on the torus associated with the hexagonal lattice (see \cite {N}), and $\nu_2^*(0)=16\pi$ (see \cite {N1}). 
 Moreover, one has the following inequality (see \cite{LY2, EI1})
$$ \nu_1^*(\gamma)  \leq 8 \pi \left\lfloor \frac{\gamma + 3}{2}\right\rfloor ,
$$
where $\lfloor \cdot \rfloor$ denotes the floor function.  Recently, A. Hassannezhad \cite{Asma} proved that there exist universal constants $A >0$ and $B>0$ such that, $\forall (k,\gamma)\in \N^2$,  
  $$\nu_k^*(\gamma)\le A \gamma+ B k.$$
  
 On the other hand,  $\nu_k^*(\gamma)$ admits also a lower bound  in terms of a linear function of $\gamma$ and $k$ as shown in our previous work \cite{CE} where we have also proved that $\nu_k^*(\gamma)$ is {\bf nondecreasing} with respect to $\gamma$.
 
  \begin{thm}\label{surfaces}
 Let $\gamma\ge 0$ and $k\ge 1$ be two integers and let $\gamma_1\dots,\gamma_p \in \N$ and $i_1, \dots,i_p\in \N^*$ be such that $\gamma_1+\dots+\gamma_p=\gamma $ and $i_1+ \dots+i_p=k$. Then 
 \begin{equation}\label{orientsurf}
 \nu_k^*(\gamma)\ge\nu_{i_1}^*(\gamma_1)+\dots +\nu_{i_p}^*(\gamma_p).
 \end{equation}
If the equality holds in \eqref{orientsurf}, then, for every $\varepsilon >0$, there exist $p$ compact orientable surfaces  $S_{1}, \cdots, S_{p}$ of genus $\gamma_1\dots,\gamma_p$, respectively, such that

\smallskip

\begin{itemize}
\item [i)] ${\nu}_{k} (S_{1}\sqcup \cdots\sqcup S_{p}) \ge (1-\varepsilon){\nu}_{k}^*(\gamma) $ ;
\item [ii)] $\forall j\le p$, $(1-\varepsilon){\nu}_{i_j}^*(\gamma_j)\le{\nu}_{i_j}( S_{j}) |S_{j}| \le {\nu}_{i_j}^*(\gamma_j)$ ;
	\item [iii)] $|S_{1}| +\cdots +|S_{p}|=1$ and, $\forall j\le p$, $\frac{{\nu}_{i_j}^*(\gamma_j)}{(1+\varepsilon){\nu}_{k}^*(\gamma)}\le |S_{j}|\le \frac{(1+\varepsilon){\nu}_{i_j}^*(\gamma_j)}{{\nu}_{k}^*(\gamma)}.$ 	
	\end{itemize}
 \end{thm}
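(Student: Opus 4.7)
The plan is to adapt the proof of Theorem \ref{thm dirandneu} to the closed-surface setting; the one genuinely new difficulty is that, whereas a disjoint union of Euclidean domains is again a domain in $\R^n$, a disjoint union of closed orientable surfaces of genera $\gamma_j$ is not a \emph{connected} surface of genus $\gamma$, and I will bridge this gap by joining the pieces via thin tubes and invoking the spectral convergence of such degenerating families. Fix $\varepsilon>0$. By Lemma \ref{C1-isometric}, pick for each $j\in\{1,\dots,p\}$ a closed orientable surface $T_j$ of genus $\gamma_j$ with $|T_j|=1$ and $\nu_{i_j}(T_j)\ge(1-\varepsilon)\nu_{i_j}^*(\gamma_j)$, and multiply its Riemannian metric by the scalar $\nu_{i_j}(T_j)/\nu_k^*(\gamma)$ to obtain $S_j$ (of the same genus) with
$$\nu_{i_j}(S_j)=\nu_k^*(\gamma),\qquad |S_j|=\frac{\nu_{i_j}(T_j)}{\nu_k^*(\gamma)}.$$
Set $\Omega=S_1\sqcup\cdots\sqcup S_p$. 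Its spectrum is the sorted union of those of the $S_j$, and each $S_j$ has at most $i_j$ eigenvalues strictly below $\nu_{i_j}(S_j)=\nu_k^*(\gamma)$, so $\Omega$ has at most $\sum_j i_j=k$ such eigenvalues, whence $\nu_k(\Omega)\ge\nu_k^*(\gamma)$.

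To apply the definition of $\nu_k^*(\gamma)$, which is a supremum over connected metrics on $\Sigma_\gamma$, join consecutive $S_j$'s by $p-1$ thin tubes of diameter $\delta>0$, producing a connected closed orientable surface $\widetilde\Omega_\delta$ of genus $\gamma_1+\cdots+\gamma_p=\gamma$. Standard spectral convergence for such degenerating tube families (of Ann\'e--Colbois or Nadirashvili-surgery type) gives $\nu_k(\widetilde\Omega_\delta)\to\nu_k(\Omega)$ and $|\widetilde\Omega_\delta|\to|\Omega|$ as $\delta\to 0$; combining $\nu_k(\widetilde\Omega_\delta)|\widetilde\Omega_\delta|\le\nu_k^*(\gamma)$ with $\nu_k(\Omega)\ge\nu_k^*(\gamma)$ in the limit yields $|\Omega|\le 1$, i.e.\
$$\sum_{j=1}^p\nu_{i_j}(T_j)\le\nu_k^*(\gamma).$$
Substituting $\nu_{i_j}(T_j)\ge(1-\varepsilon)\nu_{i_j}^*(\gamma_j)$ and letting $\varepsilon\to 0$ proves \eqref{orientsurf}.

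In the equality case, the chain above forces $|\Omega|\in[1-\varepsilon,1]$; multiplying every metric by the common scalar $1/|\Omega|$ produces new surfaces (still denoted $S_j$) satisfying $\sum_j|S_j|=1$ and $\nu_{i_j}(S_j)=|\Omega|\nu_k^*(\gamma)$. Property (ii) follows because $\nu_{i_j}(S_j)|S_j|$ is scale-invariant and equals the original $\nu_{i_j}(T_j)\in[(1-\varepsilon)\nu_{i_j}^*(\gamma_j),\nu_{i_j}^*(\gamma_j)]$; property (iii) follows from the ranges of the old $|S_j|$ and of $|\Omega|$, after replacing the initial $\varepsilon$ by $\varepsilon/(1+\varepsilon)$ to absorb the $1/(1-\varepsilon)$ factor into the required $(1+\varepsilon)$ form; and property (i) is the same eigenvalue count applied after rescaling, which gives $\nu_k(S_1\sqcup\cdots\sqcup S_p)=|\Omega|\nu_k^*(\gamma)\ge(1-\varepsilon)\nu_k^*(\gamma)$. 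The principal obstacle is the thin-tube spectral convergence: this is the sole step where the surface argument genuinely departs from the Euclidean-domain proof, and it relies on a classical but nontrivial perturbation result for eigenvalues under collapsing necks.
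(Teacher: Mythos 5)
Your proposal is correct and follows essentially the same route as the paper: choose near-maximizers for each $\nu_{i_j}^*(\gamma_j)$, rescale so each $i_j$-th eigenvalue equals $\nu_k^*(\gamma)$, count eigenvalues of the disjoint union to get $\nu_k\ge\nu_k^*(\gamma)$, and deduce the area bound by approximating the disjoint union with connected genus-$\gamma$ surfaces. The thin-tube spectral convergence you flag as the key nontrivial ingredient is precisely the paper's Lemma \ref{disjointunion}, which is proved there via quasi-isometric flattening and Ann\'e's convergence result, exactly as you anticipate.
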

  Before giving the proof of this theorem we recall that  
if $S_1$ and $S_2$ are two closed orientable surfaces in $\R^3$, then the spectrum $\left\{\nu_k(S_1\sqcup S_2 )\right\}_{k\ge0}$ of their  disjoint union is given by 
 the re-ordered union of the spectra of $S_1$ and $S_2$ (in particular, $\nu_0(S_1\sqcup S_2 ) =\nu_1(S_1\sqcup S_2 ) =0$). The following lemma shows that this spectrum of $S_1\sqcup S_2 $ can be approximated, with arbitrary accuracy, by the spectrum of a closed connected orientable surface of genus $\gamma= \mbox{genus}(S_1)+\mbox{genus}(S_2)$.

  \begin{lemme}\label{disjointunion}
 Let $S_1$ and $S_2$ be two closed surfaces in $\R^3$ of genus $\gamma_1$ and  $\gamma_2$, respectively. There exists a 1-parameter family $S_\delta\in \mathcal M (\gamma)$ of closed  surfaces  of genus $\gamma=\gamma_1+\gamma_2$ such that, for every $k\ge 0$, 
 $$\lim_{\delta \to 0}\nu_k(S_\delta )=\nu_k(S_1\sqcup S_2 )$$
 and
  $$\lim_{\delta \to 0}\vert S_\delta \vert=\vert S_1\sqcup S_2 \vert.$$
 
 \end{lemme}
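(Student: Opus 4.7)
\medskip

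\noindent \textbf{Plan.} The idea is to construct $S_\delta$ as the connected sum of $S_1$ and $S_2$ obtained by attaching a thin cylindrical handle of diameter $\delta$. First, translate $S_2$ so that $S_1$ and $S_2$ are disjoint. Pick points $p_1\in S_1$ and $p_2\in S_2$ such that, after a rigid motion, the tangent planes at $p_1$ and $p_2$ are parallel and horizontal with $p_1$ below $p_2$; locally $S_j$ is the graph of a smooth function vanishing to second order at $p_j$. For small $\delta>0$, remove a small disk $D_\delta^j\subset S_j$ of radius $\delta$ centered at $p_j$ and glue in a smooth thin cylindrical tube of radius $\approx\delta$ and fixed length $L$ (with appropriate smoothing of the junctions). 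This yields a smoothly embedded closed surface $S_\delta\in \mathcal M(\gamma_1+\gamma_2)$. Since the tube has area $O(\delta)$ and the removed disks have area $O(\delta^2)$, we immediately get $|S_\delta|\to |S_1|+|S_2|=|S_1\sqcup S_2|$.

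\medskip

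The spectral convergence $\nu_k(S_\delta)\to \nu_k(S_1\sqcup S_2)$ is handled via the min-max principle, using the logarithmic cut-off trick available in dimension two. Explicitly, set
$$
\chi_\delta(x)=\begin{cases} 0 & d(x,p_j)\le \delta,\\ \dfrac{\log(d(x,p_j)/\delta)}{\log(1/\sqrt\delta)} & \delta<d(x,p_j)<\sqrt\delta,\\ 1 & d(x,p_j)\ge \sqrt\delta,\end{cases}
$$
so that $\chi_\delta\to 1$ almost everywhere on $S_1\sqcup S_2$ and $\int|\nabla \chi_\delta|^2=O(1/|\log\delta|)\to 0$.

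\medskip

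\noindent \textbf{Upper bound.} Let $\phi_0,\dots,\phi_k$ be the first $k+1$ eigenfunctions on $S_1\sqcup S_2$. Form $\tilde\phi_j=\chi_\delta\phi_j$ on $(S_1\cup S_2)\setminus (D_\delta^1\cup D_\delta^2)$ and extend by $0$ across the tube; these are admissible test functions on $S_\delta$. Using the logarithmic energy bound for $\chi_\delta$, a Cauchy-Schwarz expansion of $|\nabla(\chi_\delta\phi_j)|^2$ and the $L^\infty$ bound on eigenfunctions of $S_1\sqcup S_2$, both the Gram matrix $(\int\tilde\phi_i\tilde\phi_j)$ and the energy matrix $(\int\nabla\tilde\phi_i\cdot\nabla\tilde\phi_j)$ converge as $\delta\to 0$ to the corresponding matrices for the $\phi_j$. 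Min-max then yields $\limsup_{\delta\to0}\nu_k(S_\delta)\le \nu_k(S_1\sqcup S_2)$.

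\medskip

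\noindent \textbf{Lower bound.} Let $u_0^\delta,\dots,u_k^\delta$ be orthonormal eigenfunctions of $S_\delta$ with eigenvalues uniformly bounded (thanks to the upper bound). Multiplying by a cut-off $\psi_\delta=1-\chi_\delta$-type function supported near the tube, the functions $(1-\psi_\delta)u_j^\delta$ vanish near the handle and can therefore be viewed as test functions on $S_1\sqcup S_2$ (extended by $0$ across the disks $D_\delta^j$). The remaining task is to check that this restriction preserves Rayleigh quotients up to $o(1)$, i.e.\ that the eigenfunctions do not concentrate on the tube region. This is the \emph{main obstacle}: the cylindrical handle is a thin region whose geometry depends on $\delta$, so Sobolev/Poincaré constants are not automatic. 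It is resolved by the fact that the offending region has area $O(\sqrt\delta)$, the gradients of $u_j^\delta$ are bounded in $L^2$ by $\nu_k(S_\delta)\le C$, and a Poincaré inequality on the cylinder together with the $2$D Sobolev embedding $H^1\hookrightarrow L^q$ ($q<\infty$) give $\int_{\text{neighborhood of tube}}(u_j^\delta)^2=o(1)$. Extracting a weakly $H^1$-convergent subsequence yields an orthonormal family on $S_1\sqcup S_2$ whose Rayleigh quotients are at most $\liminf_\delta \nu_k(S_\delta)$. A final application of min-max on $S_1\sqcup S_2$ completes the argument.
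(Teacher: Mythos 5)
Your upper bound and your area estimate are fine, but the construction itself contains a genuine error: you attach a thin tube of radius $\delta$ and \emph{fixed length} $L$. For such a neck, $\nu_k(S_\delta)$ does \emph{not} converge to $\nu_k(S_1\sqcup S_2)$ for all $k$. Indeed, take the test function $w_\delta$ equal to $\sin(\pi t/L)$ in the longitudinal coordinate of the tube and $0$ elsewhere; its Rayleigh quotient is exactly $\pi^2/L^2$ and its support is disjoint from that of your truncated eigenfunctions $\chi_\delta\phi_j$. Adjoining $w_\delta$ to $\chi_\delta\phi_0,\dots,\chi_\delta\phi_{k-1}$ and applying min--max gives $\limsup_{\delta\to0}\nu_k(S_\delta)\le\max\bigl(\nu_{k-1}(S_1\sqcup S_2),\,\pi^2/L^2\bigr)$, which is strictly smaller than $\nu_k(S_1\sqcup S_2)$ whenever $\pi^2/L^2<\nu_k(S_1\sqcup S_2)$ and $\nu_{k-1}<\nu_k$. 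This is consistent with Ann\'e's theorem on collapsing handles: the limit spectrum is the union of $\operatorname{spec}(S_1\sqcup S_2)$ with the Dirichlet spectrum $\{\pi^2m^2/L^2\}$ of the limiting segment. Since the lemma must hold for \emph{every} $k$ with a single family $S_\delta$, no fixed $L$ works; the neck length must shrink with $\delta$ (the paper glues along circles of radius $\varepsilon$ through a collar of length of order $\varepsilon^2$).

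The same phenomenon is exactly where your lower bound breaks: eigenfunctions \emph{can} concentrate entirely on a fixed-length thin tube (the longitudinal modes above do), so the claimed estimate $\int_{\text{tube}}(u_j^\delta)^2=o(1)$ is false, and the tools you invoke cannot prove it. The Sobolev embedding $H^1\hookrightarrow L^q$ does not hold with a constant uniform in $\delta$ on $S_\delta$: testing with functions depending only on the longitudinal variable of the tube shows the constant blows up like $\delta^{1/q-1/2}$. And the Poincar\'e inequality on the cylinder only yields $\int_{C_\delta}u^2\le C\int_{\partial C_\delta}u^2+CL^2\int_{C_\delta}|\nabla u|^2$, whose second term is merely $O(1)$ (bounded by $\nu_k(S_\delta)$), not $o(1)$. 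After shrinking the neck length, the non-concentration statement becomes true but still requires a real argument; the paper avoids it altogether by first deforming the metrics to be exactly cylindrical near the gluing circles (via quasi-isometries with ratio tending to $1$), then using Dirichlet--Neumann bracketing on the two punctured pieces and citing Ann\'e's convergence result for the Dirichlet and Neumann eigenvalues of a surface with a small disk removed. You would need either to reproduce such an argument or to cite it.
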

 
 In particular, the definition of $\nu_k^*(\gamma) $ does not change if we include in $\mathcal M (\gamma)$  the disjoint unions of surfaces $S_1\sqcup \cdots\sqcup S_p $ with $\mbox{genus}(S_1)+\cdots+ \mbox{genus}(S_p )=\gamma$. 
  
  \begin{proof}[Proof of Lemma \ref{disjointunion}]
  We denote by $g_1$ and $g_2$ the Riemannian metrics induced on $S_1$ and $S_2$, respectively. In what follows, we will show how to construct a 1-parameter family $g_\delta$ of Riemannian metrics on the connected sum $S$ of  $S_1$ and $S_2$ so that 
  $\lim_{\delta \to 0}\nu_k(S,g_\delta )=\nu_k(S_1\sqcup S_2 )$
 and
$\lim_{\delta \to 0}\vert (S,g_\delta) \vert=\vert S_1\sqcup S_2 \vert.$
  Using arguments as in the proof of Lemma \ref{C1-isometric}, we easily see that this family of Riemannian surfaces $(S,g_\delta )$ gives rise to a family of embedded surfaces $S_\delta \in \mathcal M (\gamma_1+\gamma_2)$ which satisfies the conditions of the statement.  For the sake of clarity we divide the proof into several steps.

\underline{Step 1} : 
Let  $x_1\in S_1$ and  $ x_2\in S_2$ be two arbitrary points. For any sufficiently small  $\delta >0$, Lemma 2.3 of \cite{CE} tells us that the metrics $g_1$ and $g_2$ of $S_1$ and $S_2$ are $(1+\delta)$-quasi-isometric to other metrics $g_{1,\delta}$ and $g_{2,\delta}$   which are Euclidean around $x_1$ and $x_2$. As in the proof of Lemma \ref{C1-isometric}, we use   \eqref{quasi-eigenvalue}  to deduce that $\lim_{\delta\to 0} \nu_k(S_i,g_{i,\delta}) =\nu_k(S_i,g_i)$ and, consequently, 
\begin{equation}\label{step1}
\lim_{\delta\to 0} \nu_k\left( (S_1,g_{1,\delta}) \sqcup(S_2,g_{2,\delta}) \right)=\nu_k\left( (S_1,g_{1}) \sqcup(S_2,g_{2}) \right).
\end{equation}

\medskip
\underline{Step 2} : Let $(S,g)$ be a Riemannian surface which is flat around a point $x\in S$. For every sufficiently small $\varepsilon >0$,   the metric $g$ can be deformed in the complement of the geodesic ball of radius $\varepsilon$ into  a metric $g_{\varepsilon}$ which is $(1+2\varepsilon)$-quasi-isometric to $g$ and so that the geodesic annulus $\mathcal A(x,\varepsilon,\varepsilon+ \varepsilon^2)$ centered at $x$ with inner and outer radii   $\varepsilon$ and $\varepsilon+ \varepsilon^2$,  is isometric to the cylinder $S^1_{\varepsilon}\times(\varepsilon,\varepsilon+\varepsilon^2)$, where $S^1_{\varepsilon}$ is the circle of radius $\varepsilon$.

\medskip
Indeed, let us choose $\varepsilon$ so that $g$ is flat in the geodesic ball $B(x,2\varepsilon)$ of radius $2\varepsilon$ centered at $x$ that we identify  with the Euclidean ball $B(O,2\varepsilon)\subset \mathbb R^2$.  Using polar coordinates, we may write 
$$g = dr^2+r^2 d\theta^2$$
with  $r\le 2\varepsilon$ and $\theta \in [0,2\pi]$.
We consider the family $g_{\varepsilon}$ of metrics on $S$ which coincide with $g$ in the complement of the  annulus $\mathcal A(x,\varepsilon,2\varepsilon)$ and whose restriction to  this  annulus (identified with $\mathcal A(0,\varepsilon,2\varepsilon)\subset \mathbb R^2$)   is given by
$$
g_{\varepsilon}(r,\theta)= dr^2+\psi_{\varepsilon}^2(r) d\theta^2,
$$
with $\psi_{\varepsilon}(r)=\varepsilon$ if $\varepsilon \le r\le \varepsilon +\varepsilon^2$,
$\psi(r)=r$ if $\varepsilon+2\varepsilon^2 \le r\le 2\varepsilon$, and $\varepsilon\le \psi_{\varepsilon}( r ) \le \varepsilon +\varepsilon^2$ if  $r\in (\varepsilon +\varepsilon^2 , \varepsilon +2\varepsilon^2 )$. Notice that we do not need to define $\psi_{\varepsilon}$ more explicitly since only $\psi_{\varepsilon}$  will be used and not its derivatives.

\medskip
On the annulus $\mathcal A(0,\varepsilon,\varepsilon+\varepsilon^2)$ the metric $g_{\varepsilon}$ coincides with the cylindrical metric $dr^2+{\varepsilon}^2 d\theta^2$, that is $\mathcal A(x,\varepsilon,\varepsilon+ \varepsilon^2)$  is isometric to $S^1_{\varepsilon}\times(\varepsilon,\varepsilon+\varepsilon^2)$. On the other hand, the metric $g_{\varepsilon}$ is clearly quasi-isometric to the Euclidean metric $g = dr^2+r^2 d\theta^2$ on $\mathcal A(0,\varepsilon,2\varepsilon)$ with 
$$\min\left(1,\frac{\psi_{\varepsilon}^2(r)}{r^2} \right) g \le g _{\varepsilon}\le \max\left(1,\frac{\psi_{\varepsilon}^2(r)}{r^2} \right) g .$$
From the definition of  $\psi_{\varepsilon}$  one has, $\forall r\in (\varepsilon, 2\varepsilon )$,
$$
\frac{1}{(1+2\varepsilon)^2} \le \frac{\psi_{\varepsilon}^2(r)}{r^2} \le (1+2\varepsilon)^2.
$$
Since $g_{\varepsilon}$  coincides with $g$ in the complement of $\mathcal A(x,\varepsilon,2\varepsilon)$,   the metric $g_{\varepsilon}$ is in fact globally $(1+2\varepsilon)$-quasi-isometric to $g$.

\medskip
\underline{Step 3} : Construction of the family of metrics $g_\delta$. 

\noindent Given a sufficiently small $\delta >0$, we first apply Step 1 and replace the metrics $g_1$ and $g_2$  by $g_{1,\delta}$ and $g_{2,\delta}$   so that, for each $i=1,2$,  $(S_i,g_{i,\delta}) $ is flat around a point $x_i\in S_i$. Thanks to Step 2, for every positive $\varepsilon <\varepsilon_0(\delta)$,  we define on $S_i$  a metric  $g_{i,\delta,\varepsilon}$ which is $(1+2\varepsilon)$-quasi-isometric to $g$ and so that the geodesic annulus $\mathcal A(x_i,\varepsilon,\varepsilon+ \varepsilon^2)$  is isometric to the cylinder $S^1_{\varepsilon}\times(\varepsilon,\varepsilon+\varepsilon^2)$.  Thus, one can smoothly  glue $(S_1\setminus B(x_1,\varepsilon), g_{1,\delta,\varepsilon})$ and $(S_2\setminus B(x_2,\varepsilon),g_{2,\delta,\varepsilon})$ along their boundaries and obtain a smooth Riemannian surface $(S,g_{\delta,\varepsilon})$  of genus  $\gamma=\gamma_1+\gamma_2$.

Let us denote by $\lambda_k(\delta,\varepsilon) $ (resp.  $\mu_k(\delta,\varepsilon) $) the eigenvalues of the disjoint union of $(S_1\setminus B(x_1,\varepsilon), g_{1,\delta})$ and $(S_2\setminus B(x_2,\varepsilon),g_{2,\delta})$ with Dirichlet (resp. Neumann)  boundary conditions. Similarily, we denote by $\bar\lambda_k(\delta,\varepsilon) $ (resp.  $\bar\mu_k(\delta,\varepsilon) $) the eigenvalues of the disjoint union of $(S_1\setminus B(x_1,\varepsilon), g_{1,\delta,\varepsilon})$ and $(S_2\setminus B(x_2,\varepsilon),g_{2,\delta,\varepsilon})$ with Dirichlet (resp. Neumann)  boundary conditions.
From the min-max principle we have  the following inequalities: 
$$ \bar\mu_k(\delta,\varepsilon) \le \nu_k(S,g_{\delta,\varepsilon}) \le \bar\lambda_k(\delta,\varepsilon).$$
Moreover, since $ g_{i,\delta,\varepsilon}$  is $ (1+2\varepsilon)$-quasi-isometric to $ g_{i,\delta}$, one has using   \eqref{quasi-eigenvalue}, 
$$ (1+2\varepsilon)^{-6}\mu_k(\delta,\varepsilon) \le \bar\mu_k(\delta,\varepsilon)   \quad \mbox{and}\quad \bar\lambda_k(\delta,\varepsilon) \le (1+2\varepsilon)^{6}\lambda_k(\delta,\varepsilon).$$
Therefore,
$$(1+2\varepsilon)^{-6}\mu_k(\delta,\varepsilon)\le \nu_k(S,g_{\delta,\varepsilon}) \le(1+2\varepsilon)^{6}\lambda_k(\delta,\varepsilon).$$
On the other hand, according to \cite{Anne},          $\lambda_k(\delta,\varepsilon) $ (resp.  $\mu_k(\delta,\varepsilon) $) converges as $\varepsilon \to\infty$, to the $k$-th eigenvalue  of the disjoint union of $(S_1, g_{1,\delta})$ and $(S_2,g_{2,\delta})$. Thus, for every $k\ge 0$,
$$\lim_{\varepsilon\to 0}\nu_k(S,g_{\delta,\varepsilon}) =\nu_k \left((S_1, g_{1,\delta}) \sqcup(S_2,g_{2,\delta})\right).$$
In particular, there exists  $\varepsilon (\delta)>0 $ such that, for every $k\le \frac 1 \delta$, 
$$\vert \nu_k(S,g_{\delta,\varepsilon(\delta)}) - \nu_k \left((S_1, g_{1,\delta}) \sqcup(S_2,g_{2,\delta})\right) \vert < \delta.$$
Thus, if we set $g_\delta= g_{\delta,\varepsilon(\delta)}$, then using the last inequality and \eqref{step1}, we will have,  for every $k\ge 0$,
$$\lim_{\delta\to 0}\nu_k(S,g_{\delta}) = \nu_k \left((S_1, g_{1}) \sqcup(S_2,g_{2})\right) .$$
As for  the area, from the construction of $g_\delta$, it is clear  that $\vert (S,g_{\delta})\vert $ tends to
$\vert S_1\vert+\vert S_2\vert$ as $\delta\to 0$.

\end{proof}

\begin{proof}[Proof of Theorem \ref{surfaces}]
Let $\varepsilon$ be any positive real number and let $S_{1}, \cdots ,S_{p}$ be a family of compact orientable surfaces such that, for each positive $j\le p$, $\mbox{genus}(S_j) =\gamma_j$ and $${\nu}_{i_j}( S_{j}) |S_{j}|> {\nu}_{i_j}^*(\gamma_j)- \varepsilon.$$
After rescaling, we may assume that 

$${\nu}_{i_j}( S_{j})  = {\nu}_{k}^*(\gamma)\qquad \mbox{and} \qquad |S_{j}| >\frac{{\nu}_{i_j}^*(\gamma_j)-\varepsilon}{{\nu}_{k}^*(\gamma)}.$$ 
One has, using arguments as in the proof of Theorem \ref{thm dirandneu},
\begin{eqnarray*}
\#\left\{l\in \N\; ; \; \nu_l(S_{1}\sqcup \cdots\sqcup S_{p})<{\nu}_{k}^*(\gamma)\right\} &=&\sum_{j=1}^p \#\left\{l\in \N\; ; \; \nu_l(S_j)<{\nu}_{k}^*(\gamma)\right\}\\
&\le & \sum_{j=1}^p i_j =k.
\end{eqnarray*}
Consequently, 
 $$\nu_k (S_{1}\sqcup \cdots\sqcup S_{p})\ge{\nu}_{k}^*(\gamma).$$
From Lemma  \ref{disjointunion} and the definition of ${\nu}_{k}^*(\gamma)$, one then deduces the following: 
$$\vert S_{1}\sqcup \cdots\sqcup S_{p}\vert = \vert S_{1}\vert+ \cdots+\vert S_{p}\vert \le 1.$$
This leads to 
$$ \sum_{j=1}^p \frac{{\nu}_{i_j}^*(\gamma_j)-\varepsilon}{{\nu}_{k}^*(\gamma)} \le 1,$$
that is,
$$\sum_{j=1}^p \nu_{i_j}^*(\gamma_j)\le  {\nu}_{k}^*(\gamma) +p\varepsilon.$$
This proves the inequality \eqref{orientsurf} since $\varepsilon $ can be chosen arbitrarily small.

Assume that the equality holds in   \eqref{orientsurf} . We can follow the same arguments as in the proof of Theorem \ref{thm dirandneu} and conclude.
\end{proof}

\begin{rem}
A direct consequence of Theorem \ref{surfaces} is that, for every $\gamma \ge 0$ and every $k\ge 1$, one has
$${\nu}_{k}^*(\gamma)\ge \sup_{i \le k-1} \left({\nu}_{i}^*(\gamma)+{\nu}_{k-i}^*(0)\right).$$
In particular, ${\nu}_{k}^*(\gamma)\ge {\nu}_{k-1}^*(\gamma)+8\pi.$
Therefore, Theorem \ref{surfaces}  improves our previous results (Theorem C and Corollary 4 of \cite{CE}).
\end{rem}
The following theorem deals with the situation where ${\nu}_k^*(\gamma)$ is approached by the $k$-th eigenvalue of a disjoint union of two surfaces. 
\begin{thm}\label{WKSURF} Let $\gamma\ge 0$ and $k\ge 2$ be two integers and assume that there exist two compact orientable surfaces $S_1$ and $S_2$  of genus $\gamma_1$, $\gamma_2$, respectively, such that
$\vert S_1\vert+\vert S_2\vert =1$, $\gamma_1+\gamma_2=\gamma$, and
\begin{equation}\label{WKsurf}
{\nu}_k(S_1 \sqcup S_2) \ge {\nu}_k^*(\gamma) -\varepsilon
\end{equation}
for some $\varepsilon \ge 0 $.
Then there exists an integer $ i \in\{1,\cdots, k-1\} $ such that 
$$0\le {\nu}_k^*(\gamma) -\left\{ {\nu}_i^*(\gamma_1)+{\nu}_{k-i}^* (\gamma_2)\right\}\le \varepsilon ,$$ 
$$ 0\le {\nu}_i^*(\gamma_1)-\nu_i(S_1)|S_1|\le\varepsilon \; \;\; \mbox{and} \;\; \;0\le{\nu}_{k-i}^*(\gamma_2)- {\nu}_{k-i} (S_2)|S_2|\le 
\varepsilon .$$


\end{thm}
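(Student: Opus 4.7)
The plan is to mirror the proof of Theorem \ref{WKDIR}, with Theorem \ref{surfaces} (surface subadditivity) playing the role of \eqref{dir}. Set $\Omega=S_1\sqcup S_2$; since the spectrum of $\Omega$ is the reordered union of the individual spectra, for every threshold $c$ one has the additive identity
$$\#\{\ell\in\N:\nu_\ell(\Omega)\le c\}=\#\{\ell\in\N:\nu_\ell(S_1)\le c\}+\#\{\ell\in\N:\nu_\ell(S_2)\le c\},$$
and likewise for the strict counts. Applied at $c=\nu_k(\Omega)$, the elementary bounds $\#\{\ell:\nu_\ell(\Omega)\le\nu_k(\Omega)\}\ge k+1$ and $\#\{\ell:\nu_\ell(\Omega)<\nu_k(\Omega)\}\le k$ force $\nu_k(\Omega)$ to coincide with some eigenvalue of one of $S_1,S_2$. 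Say $\nu_k(\Omega)=\nu_i(S_1)$ for the largest such integer $i$; the counting inequality then produces a companion index $k-i$ with $\nu_{k-i}(S_2)\le\nu_k(\Omega)$, and exactly as in the proof of Theorem \ref{WKDIR} one checks that the volume part of the hypothesis forces $i\in\{1,\dots,k-1\}$.

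Next, multiplying $\nu_i(S_1)\le\nu_k(\Omega)$ by $|S_1|$ and $\nu_{k-i}(S_2)\le\nu_k(\Omega)$ by $|S_2|$, adding, and using $|S_1|+|S_2|=1$ yields the key inequality
$$\nu_i(S_1)|S_1|+\nu_{k-i}(S_2)|S_2|\le\nu_k(\Omega).$$
Combining this with the definitional bound $\nu_{k-i}(S_2)|S_2|\le\nu_{k-i}^*(\gamma_2)$, the hypothesis $\nu_k(\Omega)\ge\nu_k^*(\gamma)-\varepsilon$, and the subadditivity $\nu_k^*(\gamma)\ge\nu_i^*(\gamma_1)+\nu_{k-i}^*(\gamma_2)$ from Theorem \ref{surfaces}, the chain
$$\nu_i(S_1)|S_1|\ge\nu_k(\Omega)-\nu_{k-i}^*(\gamma_2)\ge\nu_k^*(\gamma)-\varepsilon-\nu_{k-i}^*(\gamma_2)\ge\nu_i^*(\gamma_1)-\varepsilon$$
produces the second displayed inequality of the theorem; the third follows by the symmetric argument applied to $S_2$. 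The first conclusion $0\le\nu_k^*(\gamma)-[\nu_i^*(\gamma_1)+\nu_{k-i}^*(\gamma_2)]\le\varepsilon$ then comes by combining the second and third with the key inequality and the universal upper bound $\nu_k(\Omega)\le\nu_k^*(\gamma)$.

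The main obstacle is the careful bookkeeping around the two zero eigenvalues $\nu_0(\Omega)=\nu_1(\Omega)=0$ of the disjoint union: one must verify that the counting provides a genuine split $i+(k-i)=k$ with $i\in\{1,\dots,k-1\}$ (rather than an off-by-one split summing to $k-1$ arising from the extra zero), and that the chain of $\varepsilon$-estimates collapses to the single $\varepsilon$ bound stated in the theorem without accumulating factors. The near-maximality hypothesis $\nu_k(\Omega)\ge\nu_k^*(\gamma)-\varepsilon$ is what should rule out the degenerate configurations where $\nu_k(\Omega)$ would have multiplicity one in the union spectrum and the split would fail to produce a non-trivial $i$.
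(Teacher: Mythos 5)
There is a genuine gap, and it is a direction error that runs through the whole argument. For the supremum problem one needs the companion eigenvalue of $S_2$ to lie \emph{above} $\nu_k(S_1\sqcup S_2)$, not below it. The paper therefore takes $i$ to be the \emph{least} positive integer with $\nu_i(S_1)=\nu_k(S_1\sqcup S_2)$ and uses the strict count ($\#\{\ell:\nu_\ell(\Omega)<\nu_k(\Omega)\}\le k$) to force $\nu_{k-i}(S_2)\ge \nu_k(S_1\sqcup S_2)$: otherwise the $k+1$ eigenvalues $\nu_0(S_1),\dots,\nu_{i-1}(S_1),\nu_0(S_2),\dots,\nu_{k-i}(S_2)$ would all be strictly below $\nu_k(\Omega)$. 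This yields the key inequality in the form $\nu_k(\Omega)\le \nu_i(S_1)|S_1|+\nu_{k-i}(S_2)|S_2|$, which combines correctly with the bounds $\nu_i(S_1)|S_1|\le\nu_i^*(\gamma_1)$ and $\nu_{k-i}(S_2)|S_2|\le\nu_{k-i}^*(\gamma_2)$ that all point the same way. You instead take the \emph{largest} such $i$ and claim $\nu_{k-i}(S_2)\le\nu_k(\Omega)$ — a literal transcription of the Dirichlet argument. With $0$-based indexing, maximality of $i$ gives $\#\{j:\nu_j(S_1)\le\nu_k(\Omega)\}=i+1$, so the non-strict count only produces $\nu_{k-i-1}(S_2)\le\nu_k(\Omega)$ (the off-by-one you flag at the end but never resolve); and in any case an \emph{upper} bound on the companion eigenvalue gives your key inequality $\nu_i(S_1)|S_1|+\nu_{k-i}(S_2)|S_2|\le\nu_k(\Omega)$, from which nothing can be sandwiched, since every available comparison with the starred quantities also goes upward. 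Your subsequent chain $\nu_i(S_1)|S_1|\ge\nu_k(\Omega)-\nu_{k-i}^*(\gamma_2)\ge\cdots$ silently uses the key inequality with the sign reversed, i.e.\ exactly the inequality you have not proved. The chain itself is the right one; what is missing is the correct choice of $i$ and the correct direction of the counting step that justifies it.

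A secondary error: you invoke ``the volume part of the hypothesis'' to force $i\in\{1,\dots,k-1\}$, but Theorem \ref{WKSURF} has no lower bound on $|S_1|,|S_2|$ (unlike Theorem \ref{WKDIR}). Here the non-degeneracy of $i$ comes for free from the zero modes: for $k\ge2$ the disjoint union has exactly two zero eigenvalues, so $\nu_k(S_1\sqcup S_2)>0$, whence $i\ge1$ (since $\nu_0(S_1)=0$) and $k-i\ge1$ (since $\nu_{k-i}(S_2)\ge\nu_k(S_1\sqcup S_2)>0=\nu_0(S_2)$).
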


\begin{proof}
Since the spectrum of $S_1\sqcup S_2 $ is the re-ordrered union of the spectra of $S_1$ and $S_2$, the eigenvalue $\nu_k(S_1\sqcup S_2 )$ belongs to this union and, moreover, 
\begin{equation}\label{WKsurf1}
 \#\left\{j\in \N\; ; \; \nu_j(S_1)<\nu_k(S_1\sqcup S_2 )\right\} \; +\;  \#\left\{j\in \N\; ; \; \nu_j(S_2)<\nu_k(S_1\sqcup S_2 )\right\} \le k
 \end{equation}
and
\begin{equation}\label{WKsurf2}
 \#\left\{j\in \N\; ; \; \nu_j(S_1)\le\nu_k(S_1\sqcup S_2 )\right\} \; + \; \#\left\{j\in \N\; ; \; \nu_j(S_2)\le\nu_k(S_1\sqcup S_2 )\right\} \ge k+1 
 \end{equation}
 (recall that the numbering of the eigenvalues start from zero).
 Hence, there exists at least one integer $j\in\{1,\dots, k\}$ such that $\nu_j(S_1)=\nu_k(S_1\sqcup S_2 )$ or $\nu_j(S_2)=\nu_k(S_1\sqcup S_2 )$. Assume that the first alternative occurs and let $i$ be the least positive integer such that $\nu_i(S_1)=\nu_k(S_1\sqcup S_2 )$. 
We necessarily have $\nu_{k-i}(S_2)\ge \nu_k(S_1\sqcup S_2 )$ since, otherwise, the $k+1$ eigenvalues $\nu_0(S_1),\cdots, \nu_{i-1}(S_1)$ and $\nu_0(S_2),\cdots, \nu_{k-i}(S_1)$ would be strictly less than $\nu_k(S_1\sqcup S_2 )$  which contradicts \eqref{WKsurf2}.  Thus, $i\le k-1$ and
\begin{equation}\label{WKsurf3}
 \nu_k(S_1\sqcup S_2 )=\nu_k(S_1\sqcup S_2 )|S_1| +\nu_k(S_1\sqcup S_2 ) |S_2|\le \nu_i(S_1)|S_1|+\nu_{k-i}(S_2)|S_2|. 
\end{equation}
Since $\nu_i(S_1)|S_1|\le {\nu}_i^*(\gamma_1)$ and $\nu_{k-i}(S_2)|S_2|\le {\nu}_{k-i}^*(\gamma_2)$, we get
$$
{\nu}_k^*(\gamma) -\varepsilon\le \nu_k(S_1\sqcup S_2 )\le {\nu}_i^*(\gamma_1)+{\nu}_{k-i}^*(\gamma_2). 
$$

Now, $\nu_i(S_1)|S_1|\ge {\nu}_i^*(\gamma_1)-\varepsilon$. Otherwise, we would have, thanks to \eqref{WKsurf3} and Theorem  \ref{surfaces},
$$\nu_k(S_1\sqcup S_2 )\le \nu_i(S_1)|S_1|+\nu_{k-i}(S_2)|S_2|< {\nu}_i^*(\gamma_1)-\varepsilon + {\nu}_{k-i}^*(\gamma_2)\le {\nu}_k^*(\gamma)-\varepsilon $$
which contradicts the assumption of the theorem. 
The same argument leads to the inequality ${\nu}_{k-i} (S_2)|S_2|\ge {\nu}_{k-i}(n)+ 
\varepsilon $.

\end{proof}

As a consequence of Theorem \ref{WKSURF}, we obtain the following Wolf-Keller type result.
\begin{cor}\label{WKSURF1} Let $\gamma\ge 0$ and $k\ge 2$ be two integers and assume that there exist two compact orientable surfaces $S_1$ and $S_2$  of genus $\gamma_1$, $\gamma_2$, respectively, such that
$\vert S_1\vert+\vert S_2\vert =1$, $\gamma_1+\gamma_2=\gamma$, and
\begin{equation}\label{WKsurfaces}
{\nu}_k(S_1 \sqcup S_2) = {\nu}_k^*(\gamma). 
\end{equation}
Then there exists an integer $ i \in\{1,\cdots, k-1\} $ such that 
$${\nu}_k^*(\gamma) = {\nu}_i^*(\gamma_1)+{\nu}_{k-i}^* (\gamma_2)= \sup_{j=1,\cdots, k-1} \left\{{\nu}_j^*(\gamma_1)+{\nu}_{k-j}^* (\gamma_2)\right\}$$ 
$$\nu_i(S_1)|S_1|= {\nu}_i^*(\gamma_1)  \; \;\; \mbox{and} \;\; \; {\nu}_{k-i} (S_2)|S_2|= {\nu}_{k-i}^*(\gamma_2).$$

\end{cor}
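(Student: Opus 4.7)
The plan is to deduce Corollary \ref{WKSURF1} directly from Theorem \ref{WKSURF} by specializing to $\varepsilon=0$ and then combining the resulting equality with the general inequality from Theorem \ref{surfaces}. Since all three inequalities displayed in the conclusion of Theorem \ref{WKSURF} are bounded both below by $0$ and above by $\varepsilon$, setting $\varepsilon=0$ collapses them to genuine equalities. This immediately produces an integer $i\in\{1,\dots,k-1\}$ with
\[
\nu_k^*(\gamma)=\nu_i^*(\gamma_1)+\nu_{k-i}^*(\gamma_2),\qquad \nu_i(S_1)|S_1|=\nu_i^*(\gamma_1),\qquad \nu_{k-i}(S_2)|S_2|=\nu_{k-i}^*(\gamma_2),
\]
which takes care of the last two displayed equalities of the corollary and of the middle equality in the first line.

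It remains to upgrade the equality $\nu_k^*(\gamma)=\nu_i^*(\gamma_1)+\nu_{k-i}^*(\gamma_2)$ to the supremum characterization $\nu_k^*(\gamma)=\sup_{j=1,\dots,k-1}\bigl\{\nu_j^*(\gamma_1)+\nu_{k-j}^*(\gamma_2)\bigr\}$. For this I would apply Theorem \ref{surfaces} with $p=2$, the decomposition $\gamma=\gamma_1+\gamma_2$, and the decomposition $k=j+(k-j)$ for each $j\in\{1,\dots,k-1\}$: this gives $\nu_k^*(\gamma)\ge\nu_j^*(\gamma_1)+\nu_{k-j}^*(\gamma_2)$ for every such $j$, so taking the supremum over $j$ produces the upper bound $\sup_j\bigl\{\nu_j^*(\gamma_1)+\nu_{k-j}^*(\gamma_2)\bigr\}\le\nu_k^*(\gamma)$. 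The particular index $i$ furnished by Theorem \ref{WKSURF} realizes this supremum, so both sides coincide.

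I do not expect any real obstacle, since the corollary is essentially a clean specialization of the ``almost extremal'' result Theorem \ref{WKSURF}; the only subtlety to be careful about is that the integer $i$ lies in $\{1,\dots,k-1\}$ (so that both $\nu_i^*(\gamma_1)$ and $\nu_{k-i}^*(\gamma_2)$ are well-defined positive eigenvalues), but this is exactly what Theorem \ref{WKSURF} asserts and relies on the nondegeneracy of the two pieces $S_1,S_2$ (both have positive area, and the spectra start at $\nu_0=0$). Once $i\in\{1,\dots,k-1\}$ is secured, combining the two observations above completes the proof in a couple of lines.
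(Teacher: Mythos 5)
Your proposal is correct and is exactly the route the paper intends: the corollary is stated as a direct consequence of Theorem \ref{WKSURF}, obtained by taking $\varepsilon=0$ so that all the sandwiched inequalities collapse to equalities, and the supremum characterization follows from inequality \eqref{orientsurf} of Theorem \ref{surfaces} applied to each decomposition $k=j+(k-j)$, $\gamma=\gamma_1+\gamma_2$. Your closing remark about why $i\in\{1,\dots,k-1\}$ is also on target (here it is the presence of $\nu_0=0$ in each factor's spectrum, rather than a volume hypothesis as in the Dirichlet case, that rules out the degenerate indices).
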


\medskip

\subsection*{Extremal eigenvalues of nonorientable surfaces }
$ $\\
In the non-orientable case, we can similarly define, for every $\gamma\in\N$ and every $k\in\N$, the number $\nu_{*,k}(\gamma)$  as the supremum of $\nu_k(S)\vert S\vert\ $ over compact non-orientable surfaces of genus $\gamma$.

We have  $\nu_{*,1}(1)=\nu_1(\mathbb{R}P^2, g_{s})=12\pi$ where $ g_{s}$ is the standard metric of the projective plane (see \cite{LY2}), and $\nu_{*,1}(2)=\nu_1 (\mathbb{K}^2,g_0)=12 \pi E(2\sqrt 2/3)\simeq13.365\,\pi ,$ 
where $g_0$ is a non flat  metric of revolution  on the Klein bottle and $E(2\sqrt 2/3)$ is the complete elliptic integral of the second kind evaluated at $\frac{2\sqrt2}3$ (see \cite{EGJ}). Moreover, one has the following inequalities (see \cite{LY2, EI1})
$$ \nu_{*,1}(\gamma)  \leq 24 \pi \left\lfloor \frac{\gamma + 3}{2}\right\rfloor ,
$$
where $\lfloor \cdot \rfloor$ denotes the floor function.
The same reasoning as in the orientable case leads to the following results :

  \begin{thm}\label{nonorientsurfaces}
 Let $\gamma\ge 0$ and $k\ge 1$ be two integers and let $\gamma_1\dots,\gamma_p$ and $i_1, \dots,i_p$ be such that $\gamma_1+\dots+\gamma_p=\gamma $ and $i_1+ \dots+i_p=k$. Then 
 \begin{equation}\label{nonorientsurf}
 \nu_{*,k}(\gamma)\ge\nu_{*,i_1}(\gamma_1)+\dots +\nu_{*,i_p}(\gamma_p).
  \end{equation}
  If the equality holds in \eqref{orientsurf}, then, for every $\varepsilon >0$, there exist $p$ compact orientable surfaces  $S_1, \cdots, S_p$ of genus $\gamma_1\dots,\gamma_p$, respectively, such that

\smallskip

\begin{itemize}
\item [i)] ${\nu}_{k} (S_1\sqcup \cdots\sqcup S_p) \le (1+\varepsilon){\nu}_{*,k}(\gamma) $ ;
	\item [ii)] $|S_1| +\cdots +|S_p|=1$ and, $\forall j\le p$, $\frac{{\nu}_{*, i_j}(\gamma_j)}{(1+\varepsilon){\nu}_{*,k}(\gamma)}\le |S_j|\le \frac{(1+\varepsilon){\nu}_{*,i_j}(\gamma_j)}{{\nu}_{*,k}(\gamma)}$; 	
	\item [iii)] $\forall j\le p$, ${\nu}_{*,i_j}(\gamma_j)\le{\nu}_{i_j}( S_j) |S_j|^{2/n} \le (1+\varepsilon){\nu}_{*,i_j}(\gamma_j)$.
\end{itemize}

 \end{thm}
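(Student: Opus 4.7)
The plan is to transplant the proof of Theorem \ref{surfaces} to the non-orientable setting. For non-orientable surfaces the ``embedded'' picture is awkward (the projective plane and the Klein bottle admit no smooth embedding into $\R^3$), so I would work throughout with the abstract Riemannian formulation and take $\nu_{*,k}(\gamma)$ to be the supremum of $\nu_k(\Sigma,g)\,|(\Sigma,g)|$ over Riemannian metrics $g$ on a closed non-orientable surface $\Sigma$ of non-orientable genus $\gamma$. The three ingredients to be re-examined are: the disjoint-union approximation lemma analogous to Lemma \ref{disjointunion}, the counting argument on the spectrum of a disjoint union, and the rescaling/balancing used in the equality case.

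The heart of the matter is the non-orientable analog of Lemma \ref{disjointunion}: given compact Riemannian surfaces $(S_1,g_1),\dots,(S_p,g_p)$ whose non-orientable genera sum to $\gamma$ and with at least one $S_j$ non-orientable, I would construct a one-parameter family of connected non-orientable Riemannian surfaces of genus $\gamma$ whose spectra and areas converge to those of the disjoint union. The three steps in Lemma \ref{disjointunion} are entirely local: flatten each metric around a chosen point, replace a small geodesic disk by a cylindrical neck, and smoothly glue the truncated pieces along their boundary circles. Orientability plays no role in any step, and connect-summing with a non-orientable surface yields a non-orientable surface with additive non-orientable genus. The spectral convergence then follows from \eqref{quasi-eigenvalue} together with the Anne's theorem on Dirichlet/Neumann eigenvalues of surfaces with small disks removed, exactly as in the orientable proof.

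Granted this lemma, \eqref{nonorientsurf} follows the same outline as Theorem \ref{surfaces}: fix $\varepsilon>0$, pick non-orientable surfaces $S_j$ of genus $\gamma_j$ with $\nu_{i_j}(S_j)|S_j|>\nu_{*,i_j}(\gamma_j)-\varepsilon$, rescale so that $\nu_{i_j}(S_j)=\nu_{*,k}(\gamma)$, and note that the disjoint union has at most $\sum_j i_j=k$ eigenvalues strictly less than $\nu_{*,k}(\gamma)$, whence $\nu_k\bigl(\bigsqcup_j S_j\bigr)\ge\nu_{*,k}(\gamma)$. The approximation lemma then forces $|\bigsqcup_j S_j|\le 1$, and summing the lower bounds $|S_j|\ge (\nu_{*,i_j}(\gamma_j)-\varepsilon)/\nu_{*,k}(\gamma)$ and letting $\varepsilon\to 0$ yields the inequality. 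For the equality case, the argument is parallel to the proof of Theorem \ref{thm dirandneu}(2): the total area lies in $[(1+\varepsilon)^{-1},1]$, and rescaling the $S_j$ by the common factor $|\bigsqcup_j S_j|^{-1}$ gives a family satisfying (i)--(iii) by direct verification.

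The main obstacle is the non-orientable version of Lemma \ref{disjointunion}, and more precisely arranging that at least one near-maximizer summand be genuinely non-orientable so that the connect-sum surface has the prescribed non-orientable genus $\gamma$. If $\gamma\ge 1$ this can always be secured by choosing (say) the summand of largest $\gamma_j$ to be non-orientable, which is admissible because $\nu_{*,i_j}(\gamma_j)$ is approximated by non-orientable surfaces by definition. Once this point is handled, the remainder of the proof is a line-by-line transcription of the orientable argument.
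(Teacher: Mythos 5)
Your proposal is correct and matches the paper's intent exactly: the paper gives no separate argument for Theorem \ref{nonorientsurfaces}, stating only that ``the same reasoning as in the orientable case'' applies, and your transplantation of Lemma \ref{disjointunion} (whose flattening, neck-insertion and gluing steps are indeed purely local and orientation-blind) together with the counting and rescaling arguments is precisely that reasoning. Your added remark on securing the correct non-orientable genus of the connected sum is a worthwhile detail the paper leaves implicit, and it is handled correctly since any summand with $\gamma_j\ge 1$ is non-orientable by definition and non-orientable genus is additive under connected sum.
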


\bibliographystyle{plain}
\bibliography{biblio1}

\end{document}